\documentclass[11pt]{article}

\usepackage{amsmath,amssymb,amsfonts,amsthm}
\usepackage{mathtools}
\usepackage{enumitem}
\usepackage[a4paper,margin=1in]{geometry}
\usepackage{hyperref}

\newtheorem{theorem}{Theorem}[section]
\newtheorem{proposition}[theorem]{Proposition}

\newtheorem{corollary}[theorem]{Corollary}

\theoremstyle{definition}
\newtheorem{definition}[theorem]{Definition}
\newtheorem{example}[theorem]{Example}

\theoremstyle{remark}
\newtheorem*{remark}{Remark}
\newcommand{\C}{\mathcal C}

\newcommand{\A}{\mathcal A}
\newcommand{\B}{\mathcal B}
\newcommand{\T}{\mathcal T}
\newcommand{\F}{\mathcal F}

\newcommand{\LoewyLength}{\operatorname{LoewyLength}}
\newcommand{\Shad}{\operatorname{Shad}}

\title{Universal Semisimple Reconstruction and Obstruction Theory}

\author{
Joe Gildea\\
Department of Computing Science and Mathematics,\\
School of Informatics and Creative Arts,\\
Dundalk Institute of Technology\\
\texttt{gildeajoe@gmail.com}}
\date{}

\begin{document}

\maketitle

\begin{abstract}
We develop a semisimple obstruction theory for finite-length abelian
categories. To each object \(X\) we associate a canonical obstruction
subobject \(\mathcal O(X)\), consisting of the part invisible to all
morphisms into semisimple objects. The quotient
\(X/\mathcal O(X)\) is semisimple and universal among such morphisms,
yielding a canonical semisimplification reflector and characterizing
semisimplicity by the vanishing of obstruction. Iterating the construction gives an obstruction filtration which, in
finite-length module categories, recovers the classical radical
filtration and Loewy theory. Obstruction is also heart-dependent:
under Happel--Reiten--Smal{\o} tilting, extension data can appear as new
obstruction layers, leading to obstruction shadows and visible torsion.
Thus classical radical theory arises as the fixed-heart
module-theoretic instance of a categorical framework for semisimple
reconstruction, persistence, and variation of heart.
\end{abstract}

\textbf{Mathematics Subject Classification (2020).}
Primary 18E10; Secondary 18G80, 18E40, 16D90.

\medskip
\noindent\textbf{Keywords.}
semisimple obstruction,
semisimple reconstruction,
finite-length abelian categories,
obstruction filtrations,
HRS tilting,
obstruction shadows.

\section{Introduction}

Semisimplicity occupies a central position throughout algebra,
representation theory, and category theory. In a semisimple setting,
objects are determined by their simple constituents, every short exact
sequence splits, and structural information is visible through
semisimple quotients. Classical radical theory measures the failure of
this behaviour through constructions such as the Jacobson radical,
module radicals, radical filtrations, and Loewy series
\cite{Jacobson1956Structure,Faith1973Algebra,
AndersonFuller1992Rings,Lam2001FirstCourse,
Pierce,CurtisReiner}.

The purpose of this paper is to develop a categorical obstruction theory
for semisimplicity based on semisimple detectability and universal
reconstruction. Given an object \(X\) in a finite-length abelian
category \(\C\), we define the obstruction subobject
\[
\mathcal O(X)
=
\bigcap_{f:X\to S,\;S\in\mathcal C_{ss}}
\ker(f),
\]
which records the part of \(X\) invisible to every morphism into a
semisimple object. The construction is intrinsic to the ambient
category: it requires no ring structure, maximal ideal theory, or
projective presentation.

In the setting of finite-length abelian categories and categorical
algebra \cite{Freyd,MacLane,Serre}, we show that
\(X/\mathcal O(X)\) is semisimple and universal among morphisms from
\(X\) into semisimple objects. Consequently, the assignment
\(X\mapsto X/\mathcal O(X)\) defines a canonical semisimplification
reflector. The theory develops from this construction in three
interconnected directions.

\medskip

\noindent
\textbf{Reconstruction.}
The obstruction subobject determines the universal semisimple quotient
and semisimplification reflector, and an object is semisimple precisely
when its obstruction vanishes. Thus \(\mathcal O(X)\) measures the
failure of \(X\) to be recoverable from semisimple data.

\medskip

\noindent
\textbf{Persistence.}
Iteration gives the canonical filtration
\[
X
\supseteq
\mathcal O(X)
\supseteq
\mathcal O^2(X)
\supseteq
\cdots,
\]
whose depth measures the persistence of non-semisimple structure under
successive semisimple collapse. The filtration is compatible with
extensions and quotients, and in finite-length module categories it
recovers the classical radical filtration and Loewy theory.

\medskip

\noindent
\textbf{Heart dependence and tilting.}
Obstruction depends on the ambient heart. In a triangulated category
with finite-length hearts, semisimple detectability may therefore
change under variation of heart. We study this phenomenon under
Happel--Reiten--Smal{\o} tilting \cite{HRS}, where extension data
crossing a torsion pair can appear as new obstruction layers. This
leads to obstruction shadows and visible torsion, and to rigidity
phenomena in which obstruction data recover torsion-theoretic
information associated with the tilt. In this way the theory connects
semisimple obstruction with the geometry of hearts and the structure
of derived categories
\cite{HappelTriangulated,Neeman,AssemSimsonSkowronski}.

Although the theory recovers classical radical behaviour, it is not
defined in radical-theoretic terms. For finite-length module categories
we prove that \(\mathcal O(M)=\operatorname{rad}(M)\), so the obstruction
filtration recovers the radical filtration, Loewy theory, and the
Jacobson radical of the regular module. These are recovery theorems:
the obstruction invariant itself is defined categorically through
semisimple detectability, independently of rings, ideals, and radicals.
The same framework therefore applies in settings where classical
radical theory is unavailable, including varying hearts of triangulated
categories and broader categorical representation-theoretic contexts
\cite{Benson,EtingofGelaki2015Tensor}. From this perspective, classical
radical theory is the fixed-heart module-theoretic instance of a more
general obstruction theory governing reconstruction, persistence, and
heart-dependent semisimple detectability.

\medskip

\noindent
\textbf{Organization of the paper.}
Section~\ref{sec:reconstruction} develops the obstruction subobject,
universal semisimple quotient, and semisimplification reflector.
Section~\ref{sec:persistence} studies obstruction filtrations,
persistence, nilpotence, and Loewy recovery.
Section~\ref{sec:tilting} investigates heart-dependent obstruction under
HRS tilting, including obstruction shadows, visible torsion, and
rigidity.
Section~\ref{sec:classical} compares the obstruction formalism with
classical radical theory in module categories.
Section~\ref{sec:examples} gives illustrative examples, and
Section~\ref{sec:conclusion} concludes the paper.

\section{Reconstruction by Semisimple Collapse}
\label{sec:reconstruction}

The guiding idea is that semisimple objects provide the basic detectors
of structure in a finite-length abelian category. A subobject of \(X\)
that is annihilated by every morphism from \(X\) into a semisimple
object is therefore invisible to semisimple detection. The intersection
of all such kernels is the largest subobject with this property, and
hence measures the precise obstruction to recovering \(X\) from its
semisimple quotient data.

\subsection{The semisimple obstruction subobject}

The obstruction subobject isolates the part of \(X\) that cannot be
seen by semisimple objects. If semisimple data are to provide a means of
reconstructing \(X\), then the natural strategy is to remove precisely
this invisible component. One is therefore led to consider the quotient
\(X/\mathcal O(X)\). The following result shows that this quotient
enjoys a universal property with respect to morphisms into semisimple
objects and hence represents the maximal semisimple information carried
by \(X\).

\begin{definition}
\label{def:obstruction_subobject}
Let $\C$ be a finite-length abelian category and let $X\in\C$.
Define
\[
\mathcal{O}(X)
=
\bigcap_{f:X\to S,\;S\in\mathcal{C}_{ss}}
\ker(f),
\]
where \(\mathcal C_{\mathrm{ss}}\) denotes the full subcategory of
semisimple objects of \(\mathcal C\).
We call \(\mathcal{O}(X)\) the \emph{obstruction subobject} of \(X\).
\end{definition}

Equivalently, \(\mathcal O(X)\) is the largest subobject of \(X\)
that is invisible to all semisimple quotients.

\begin{remark}
The construction depends only on morphisms from \(X\) into semisimple
objects. It is therefore intrinsic to the finite-length abelian category
\(\C\), rather than to any chosen ring presentation. Thus
\(\mathcal O(X)\) measures the part of \(X\) that cannot be detected by
semisimple objects and may be viewed as an obstruction to semisimple
reconstruction.
\end{remark}

\subsection{Universal semisimple quotient}

The obstruction subobject \(\mathcal O(X)\) captures precisely the part
of \(X\) that is invisible to semisimple objects. It is therefore
natural to ask whether quotienting by \(\mathcal O(X)\) produces a
canonical object that retains all semisimple information carried by
\(X\). The following result shows that this is indeed the case: the
quotient \(X/\mathcal O(X)\) is universal among morphisms from \(X\)
into semisimple objects and hence provides a canonical semisimple
reconstruction of \(X\).

The next result shows that this obstruction is not merely a distinguished
subobject. It determines a canonical quotient through which every
morphism from \(X\) into a semisimple object factors. Thus semisimple
obstruction gives rise to a universal reconstruction procedure.

\begin{theorem}\label{thm:universal_semisimple_quotient}
For every \(X\in\C\), the quotient $X/\mathcal O(X)$ is semisimple and universal among morphisms from \(X\) into semisimple
objects.
\end{theorem}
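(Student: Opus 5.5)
The plan is to first replace the possibly infinite intersection defining \(\mathcal O(X)\) by a finite one, and then read off both semisimplicity and universality from that finite description. Since \(\C\) is finite-length, \(X\) satisfies the descending chain condition on subobjects. So among all finite intersections \(\ker(f_1)\cap\cdots\cap\ker(f_n)\), with \(f_i:X\to S_i\) and \(S_i\in\C_{ss}\), there is a minimal one, say \(K\). For any further \(f:X\to S\) we have \(K\cap\ker(f)\subseteq K\) and this is again a finite intersection, so minimality forces \(K\cap\ker(f)=K\), that is, \(K\subseteq\ker(f)\). Hence
\[
K=\mathcal O(X)=\ker(f_1)\cap\cdots\cap\ker(f_n),
\]
and the intersection in Definition~\ref{def:obstruction_subobject} is in fact attained by finitely many morphisms. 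This step is where the finite-length hypothesis is essential, and it is the main point to get right: one must check that the minimal element exists and that arbitrary intersections of subobjects make sense. In a finite-length category I would simply take the finite description as the meaning of the intersection, noting that it is the meet in the subobject lattice.

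Next I show \(X/\mathcal O(X)\) is semisimple. Form
\[
F=(f_1,\dots,f_n):X\to S_1\oplus\cdots\oplus S_n .
\]
Its kernel is \(\bigcap_i\ker(f_i)=\mathcal O(X)\), so \(F\) induces a monomorphism \(X/\mathcal O(X)\hookrightarrow \bigoplus_i S_i\). A finite direct sum of semisimple objects is semisimple. In a finite-length abelian category every subobject of a semisimple object is semisimple: it is a direct summand, since the lattice of subobjects of a semisimple object is complemented, or equivalently one argues by induction on length using simple subobjects. Hence \(X/\mathcal O(X)\) is semisimple, and the quotient map \(\pi:X\to X/\mathcal O(X)\) is itself one of the morphisms into \(\C_{ss}\).

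For universality, let \(g:X\to S\) with \(S\) semisimple. By definition \(\mathcal O(X)\subseteq\ker(g)\). By the cokernel property of \(\pi\), \(g\) therefore factors as \(g=\bar g\circ\pi\). This factorization is unique because \(\pi\) is an epimorphism. Thus \(\pi\) is initial among morphisms from \(X\) to semisimple objects, which is the universal property claimed.
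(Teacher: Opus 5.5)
Your proof is correct and follows essentially the same route as the paper. Universality comes from the fact that $\mathcal O(X)$ lies in every kernel, so each morphism into a semisimple object factors uniquely through the epimorphism $X\to X/\mathcal O(X)$; semisimplicity comes from using finite length to cut the intersection down to finitely many morphisms and embedding the quotient into a finite direct sum of semisimple objects. The only difference is cosmetic: you do the finite reduction on $X$ itself, with an explicit minimal-element argument that also makes sense of the possibly infinite intersection, whereas the paper passes to $Q=X/\mathcal O(X)$ and simply asserts that finitely many maps $Q\to S_i$ with jointly trivial kernel exist.
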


\begin{proof}
Let $R=\mathcal O(X)$ and let $q:X\to X/R$ be the quotient morphism. By definition,
\[
R=\bigcap_{f:X\to S,\;S\in\mathcal C_{ss}}\ker(f).
\]
Hence every morphism \(f:X\to S\) into a semisimple object satisfies
\(R\subseteq\ker(f)\). Therefore \(f\) factors uniquely through \(q\).
Thus every semisimple observation of \(X\) is determined entirely by the
quotient \(X/R\), establishing the universal factorization property. 
It remains to show that \(X/R\) is semisimple. Let $Q=X/R$. Since \(R\) is the intersection of the kernels of all morphisms from
\(X\) into semisimple objects, the intersection of the kernels of all
morphisms $Q\to S$, $S\in\mathcal C_{ss}$, is zero. Because \(Q\) has finite length, there exist finitely many morphisms
$g_i:Q\to S_i$, $i=1,\dots,n$, with each \(S_i\) semisimple, whose kernels already have trivial
intersection. Consequently,
\[
g=(g_1,\dots,g_n):
Q\longrightarrow
\prod_{i=1}^{n}S_i
\]
is a monomorphism. The finite product \(\prod_{i=1}^{n}S_i\) is semisimple, and every
subobject of a semisimple object in an abelian category is semisimple.
Hence \(Q\) is semisimple. Therefore \(X/\mathcal O(X)\) is semisimple and universal among
morphisms from \(X\) into semisimple objects.
\end{proof}

The previous theorem identifies \(X/\mathcal O(X)\) as the canonical
semisimple image of \(X\). The next result shows that the obstruction
subobject is characterized entirely by this universal property. In this
sense, \(\mathcal O(X)\) is exactly the part of \(X\) that prevents
semisimple reconstruction.

\begin{theorem}\label{thm:obstruction_characterization}
Let \(X\in\C\). The subobject \(\mathcal O(X)\subseteq X\) is the unique
subobject \(R_X\subseteq X\) satisfying the following conditions:
\begin{enumerate}
\item \(R_X\) is the largest subobject annihilated by every morphism
from \(X\) into a semisimple object;

\item \(X/R_X\) is semisimple and universal among morphisms from \(X\)
into semisimple objects;

\item for every subobject \(N\subseteq X\), \(X/N\) is semisimple if and only if \(R_X\subseteq N\).
\end{enumerate}
\end{theorem}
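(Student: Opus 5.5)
We first check that \(\mathcal O(X)\) satisfies each of the three conditions, and then show that any subobject satisfying any one of them must equal \(\mathcal O(X)\). Each condition pins down \(R_X\) on its own, so uniqueness will follow condition by condition.

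\textbf{Condition (1).} A subobject \(K\subseteq X\) is annihilated by every \(f:X\to S\) with \(S\in\C_{ss}\) exactly when \(K\subseteq\ker(f)\) for all such \(f\). This holds exactly when \(K\subseteq\mathcal O(X)\), by Definition~\ref{def:obstruction_subobject} and the defining property of the intersection. Hence \(\mathcal O(X)\) is itself annihilated and contains every annihilated subobject, so it is the largest one. A largest element of a poset is unique, which settles uniqueness for (1).

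\textbf{Condition (2).} Theorem~\ref{thm:universal_semisimple_quotient} shows that \(\mathcal O(X)\) satisfies (2). For uniqueness, let \(R\subseteq X\) be such that \(p:X\to X/R\) is semisimple and universal. Since \(X/R\) is semisimple, \(p\) is itself a morphism into a semisimple object, so \(\mathcal O(X)\subseteq\ker p=R\). Conversely, apply the universal property of \(p\) to the quotient \(q:X\to X/\mathcal O(X)\), which has semisimple target by Theorem~\ref{thm:universal_semisimple_quotient}. This gives a factorization \(q=h\circ p\), so \(R=\ker p\subseteq\ker q=\mathcal O(X)\). Hence \(R=\mathcal O(X)\). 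Here the uniqueness of a universal arrow up to isomorphism is not even needed; only the two kernel inclusions are used.

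\textbf{Condition (3).} First suppose \(X/N\) is semisimple. Then the quotient map \(X\to X/N\) is a morphism into a semisimple object, so \(\mathcal O(X)\subseteq N\). Conversely, suppose \(\mathcal O(X)\subseteq N\). Then \(X/N\) is a quotient of \(X/\mathcal O(X)\), which is semisimple by Theorem~\ref{thm:universal_semisimple_quotient}. This direction requires the standard fact that quotients of semisimple objects in an abelian category are semisimple. One proves it as follows: a subobject of a finite direct sum of simples is a direct summand, so every short exact sequence \(0\to K\to S\to S/K\to 0\) splits, and therefore \(S/K\) is isomorphic to a subobject of \(S\), which is semisimple. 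For uniqueness, suppose \(R\) satisfies (3). Taking \(N=R\) gives \(X/R\) semisimple, so \(\mathcal O(X)\subseteq R\). Taking \(N=\mathcal O(X)\), which has semisimple quotient, gives \(R\subseteq\mathcal O(X)\).

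The only step that uses more than the definitions and Theorem~\ref{thm:universal_semisimple_quotient} is the closure of \(\C_{ss}\) under quotients, needed for the converse direction of (3). I expect this to be the main, if mild, obstacle. I will handle it with the splitting argument above, citing the same closure-under-subobjects fact that was used in the proof of Theorem~\ref{thm:universal_semisimple_quotient}.
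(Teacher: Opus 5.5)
Your proposal is correct and follows essentially the same route as the paper. You verify (1) from the definition, (2) from Theorem~\ref{thm:universal_semisimple_quotient}, and (3) via the quotient-map inclusion together with closure of semisimple objects under quotients, and you get uniqueness from the same two-inclusion argument. The only differences are that you show each condition separately determines \(\mathcal O(X)\), whereas the paper derives uniqueness from condition (3) alone, and that you justify closure of \(\C_{ss}\) under quotients, which the paper simply asserts.
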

\begin{proof}
Let
\[
R_X=\mathcal O(X)
=
\bigcap_{f:X\to S,\;S\in\mathcal C_{ss}}\ker(f).
\]

First, \(R_X\) is annihilated by every morphism from \(X\) into a semisimple object by definition. If \(N\subseteq X\) is any subobject
annihilated by every such morphism, then \(N\subseteq \ker(f)\) for every morphism \(f:X\to S\) with \(S\in\mathcal C_{ss}\). Hence
\[
N\subseteq
\bigcap_{f:X\to S,\;S\in\mathcal C_{ss}}\ker(f)
=
R_X.
\]
Thus \(R_X\) is the largest subobject invisible to all semisimple
morphisms. This proves condition (1). By Theorem~\ref{thm:universal_semisimple_quotient}, the quotient
\(X/R_X\) is semisimple and every morphism from \(X\) into a semisimple object factors uniquely through the quotient map
\(X\to X/R_X\). Thus \(R_X\) satisfies condition (2). Now let \(N\subseteq X\). If \(X/N\) is semisimple, then the quotient map
\(X\to X/N\) is a morphism from \(X\) into a semisimple object. Hence, by the
definition of \(R_X\), \(R_X\subseteq N\). Conversely, if \(R_X\subseteq N\), then \(X/N\) is a quotient of the
semisimple object \(X/R_X\). Since quotients of semisimple objects in an abelian category are semisimple, \(X/N\) is
semisimple. Therefore \(X/N\) is semisimple if and only if \(R_X\subseteq N\). This proves condition (3).

Finally, condition (3) uniquely determines \(R_X\). Indeed, suppose that
\(R'_X\subseteq X\) also satisfies condition (3). Since \(X/R_X\) is
semisimple, condition (3) for \(R'_X\) gives \(R'_X\subseteq R_X\).
Similarly, since \(X/R'_X\) is semisimple, condition (3) for \(R_X\)
gives \(R_X\subseteq R'_X\). Hence \(R_X=R'_X\). Equivalently, \(R_X\)
is precisely the smallest subobject whose quotient is semisimple, and
is therefore the minimal obstruction to obtaining a semisimple quotient
of \(X\). Therefore \(\mathcal O(X)\) is the unique subobject satisfying
the stated conditions.
\end{proof}

\begin{remark}
Together, Theorems~\ref{thm:universal_semisimple_quotient}
and~\ref{thm:obstruction_characterization} show that semisimple
reconstruction is controlled entirely by the obstruction subobject
\(\mathcal O(X)\). The obstruction records precisely the part of \(X\)
that cannot be detected by semisimple objects, while the quotient
\(X/\mathcal O(X)\) captures all semisimple information carried by \(X\).
\end{remark}

\subsection{Functoriality and semisimplification}

The universal characterization identifies the semisimple obstruction as
a canonical subobject. The next result shows that this construction is
compatible with morphisms and therefore extends from objects to a
functorial semisimplification process.

Functoriality shows that semisimple reconstruction is compatible with
morphisms. The next result identifies this construction categorically as
the reflector onto the full subcategory of semisimple objects.

\begin{theorem}
\label{thm:functoriality}
Every morphism \(f:X\to Y\) induces a unique morphism
\[
\overline f:
X/\mathcal O(X)\longrightarrow Y/\mathcal O(Y)
\]
such that $q_Y\circ f=\overline f\circ q_X$, where $q_X:X\to X/\mathcal O(X)$ and $q_Y:Y\to Y/\mathcal O(Y)$ are the canonical quotient maps.
\end{theorem}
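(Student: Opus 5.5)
The plan is to apply the universal property of Theorem~\ref{thm:universal_semisimple_quotient} to the composite \(q_Y\circ f\). By that theorem, \(Y/\mathcal O(Y)\) is semisimple. Hence \(q_Y\circ f:X\to Y/\mathcal O(Y)\) is a morphism from \(X\) into a semisimple object. Condition (1) of Theorem~\ref{thm:obstruction_characterization}, or directly Definition~\ref{def:obstruction_subobject}, then gives
\[
\mathcal O(X)\subseteq \ker(q_Y\circ f).
\]
Equivalently, \(q_Y\circ f\) vanishes on the subobject \(\mathcal O(X)\hookrightarrow X\).

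For existence, I would use the cokernel property of the quotient map \(q_X:X\to X/\mathcal O(X)\). Since \(q_X\) is the cokernel of \(\mathcal O(X)\hookrightarrow X\) and \(q_Y\circ f\) kills \(\mathcal O(X)\), there is a morphism \(\overline f:X/\mathcal O(X)\to Y/\mathcal O(Y)\) with
\[
\overline f\circ q_X=q_Y\circ f.
\]
This is exactly the factorization asserted in Theorem~\ref{thm:universal_semisimple_quotient}, applied with \(S=Y/\mathcal O(Y)\).

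For uniqueness, I would note that \(q_X\) is an epimorphism. If \(g\circ q_X=q_Y\circ f=\overline f\circ q_X\), then \(g=\overline f\).

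There is no real obstacle here. The only point needing care is that the target \(Y/\mathcal O(Y)\) is semisimple, which is what allows \(q_Y\circ f\) to count as a "semisimple observation" of \(X\); this was established in Theorem~\ref{thm:universal_semisimple_quotient}.

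As a byproduct, I would record that uniqueness immediately yields \(\overline{\mathrm{id}_X}=\mathrm{id}\) and \(\overline{g\circ f}=\overline g\circ\overline f\), since both sides of each identity satisfy the same defining equation. This shows that \(X\mapsto X/\mathcal O(X)\) is a functor, and it sets up the reflector statement that follows.
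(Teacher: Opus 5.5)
Your proof is correct and essentially the paper's argument. The paper tests \(f\) against every morphism \(g:Y\to S\) with \(S\) semisimple, obtaining \(f(\mathcal O(X))\subseteq\bigcap_g\ker(g)=\mathcal O(Y)\) from the definition alone before descending; you test only against \(q_Y\), which needs the semisimplicity of \(Y/\mathcal O(Y)\) from Theorem~\ref{thm:universal_semisimple_quotient} but gives the same inclusion since \(\ker(q_Y)=\mathcal O(Y)\) (the paper itself uses your version in Proposition~\ref{prop:extension_persistence}), and your explicit epimorphism argument for uniqueness fills in what the paper leaves as ``descends uniquely.''
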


\begin{proof}
Let $g:Y\to S$ be any morphism into a semisimple object. Then $g\circ f:X\to S$ is also a morphism into a semisimple object. Hence
$\mathcal O(X)\subseteq \ker(g\circ f)$. Therefore $f\bigl(\mathcal O(X)\bigr)\subseteq \ker(g)$. Since this holds for every morphism $g:Y\to S$
with \(S\) semisimple, we obtain
\[
f\bigl(\mathcal O(X)\bigr)
\subseteq
\bigcap_{g:Y\to S}\ker(g)
=
\mathcal O(Y).
\]

Thus \(f\) maps the obstruction subobject of \(X\) into the
obstruction subobject of \(Y\). Consequently, \(f\) descends uniquely
to a morphism $\overline f:X/\mathcal O(X)\to Y/\mathcal O(Y)$ satisfying $q_Y\circ f=\overline f\circ q_X$.
\end{proof}

Functoriality allows the universal semisimple quotient to be assembled
into a global construction on the category. The resulting functor is not
merely a quotient operation: it is the reflector onto the full
subcategory of semisimple objects.

\begin{theorem}
\label{thm:semisimplification_reflector}
The assignment $X\longmapsto X/\mathcal O(X)$ defines a functor $\mathsf S:\C\to\mathcal C_{ss}$ left adjoint to the inclusion $\iota:\mathcal C_{ss}\hookrightarrow \C$.
\end{theorem}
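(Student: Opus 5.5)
We will prove the statement by exhibiting the adjunction through its unit and the universal property already established. First, \(\mathsf S\) is well defined on objects: Theorem~\ref{thm:universal_semisimple_quotient} shows \(X/\mathcal O(X)\in\mathcal C_{ss}\). On morphisms we set \(\mathsf S(f)=\overline f\), the morphism supplied by Theorem~\ref{thm:functoriality}.

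Functoriality follows from the uniqueness clause of that theorem. Both \(\mathrm{id}_{X/\mathcal O(X)}\) and \(\overline{\mathrm{id}_X}\) satisfy \(q_X\circ\mathrm{id}_X=(-)\circ q_X\). For composable \(f:X\to Y\) and \(g:Y\to Z\), both \(\overline{g\circ f}\) and \(\overline g\circ\overline f\) satisfy
\[
q_Z\circ g\circ f=(-)\circ q_X .
\]
Since \(q_X\) is an epimorphism, such a morphism is unique. Hence \(\mathsf S\) preserves identities and composition.

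Next, the quotient maps \(q_X:X\to\iota\mathsf S(X)\) form a natural transformation \(\eta:\mathrm{id}_{\C}\Rightarrow\iota\mathsf S\). Naturality is exactly the identity \(q_Y\circ f=\overline f\circ q_X\) of Theorem~\ref{thm:functoriality}.

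The core step is to show that \(\eta_X=q_X\) is a universal arrow from \(X\) to \(\iota\). That is, for every semisimple \(S\) and every \(h:X\to\iota S\) there is a unique \(\tilde h:\mathsf S(X)\to S\) with \(\tilde h\circ q_X=h\). This is the universal factorization of Theorem~\ref{thm:universal_semisimple_quotient}. Existence holds because \(\mathcal O(X)\subseteq\ker h\). Uniqueness holds because \(q_X\) is epi. Since \(\mathcal C_{ss}\) is a full subcategory, \(\tilde h\) is a morphism of \(\mathcal C_{ss}\).

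We conclude by the standard criterion that a functor admits a left adjoint precisely when every object has a universal arrow to it. Equivalently, the map
\[
\operatorname{Hom}_{\mathcal C_{ss}}(\mathsf S X,S)\to\operatorname{Hom}_{\C}(X,\iota S),\qquad \tilde h\mapsto \tilde h\circ q_X,
\]
is a bijection. This bijection is natural in \(X\) by naturality of \(q\), and natural in \(S\) trivially. The left adjoint built from universal arrows agrees on morphisms with \(\overline f\): \(\overline f\) is the unique map with \(\overline f\circ q_X=q_Y\circ f\).

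There is no real obstacle here. The only points needing care are consistency and fullness: the functor constructed from universal arrows must coincide with the \(\mathsf S\) defined via Theorem~\ref{thm:functoriality}, and we must use fullness of \(\mathcal C_{ss}\) so that hom-sets computed in \(\mathcal C_{ss}\) and in \(\C\) agree.
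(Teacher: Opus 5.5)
Your proposal is correct and follows the same route as the paper. It defines \(\mathsf S\) on morphisms via the descent of Theorem~\ref{thm:functoriality} and reads off the adjunction bijection \(\operatorname{Hom}_{\mathcal C_{ss}}(\mathsf S(X),S)\cong\operatorname{Hom}_{\C}(X,\iota S)\) from the universal factorization through \(q_X\) in Theorem~\ref{thm:universal_semisimple_quotient}. You are more explicit than the paper, which asserts the following without detail: the functor laws (via \(q_X\) being epi), naturality of the unit and of the bijection, and the role of fullness of \(\mathcal C_{ss}\).
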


\begin{proof}
For each \(X\in\mathcal C\), define $\mathsf S(X)=X/\mathcal O(X)$. By Theorem~\ref{thm:universal_semisimple_quotient}, \(\mathsf S(X)\) is semisimple. Hence \(\mathsf S\) is
objectwise a map from \(\mathcal C\) to \(\mathcal C_{ss}\). Let $f:X\to Y$ be a morphism in \(\mathcal C\). By Theorem~\ref{thm:functoriality},
$f\bigl(\mathcal O(X)\bigr) \subseteq \mathcal O(Y)$. Therefore \(f\) descends uniquely to a morphism $\mathsf S(f): X/\mathcal O(X) \to Y/\mathcal O(Y)$.
Thus \(\mathsf S\) is a functor. Now let \(S\in\mathcal C_{ss}\). By the universal semisimple quotient property, every morphism
$f:X\to S$ factors uniquely through the quotient morphism $q_X:X\to \mathsf S(X)$. That is, there exists a unique morphism $\overline f:\mathsf S(X)\to S$
such that $f=\overline f\circ q_X$. In other words, every morphism from \(X\) into a semisimple object is
completely determined by the semisimple reconstruction
\(\mathsf S(X)\). Consequently, composition with \(q_X\) induces a natural bijection
\[
\operatorname{Hom}_{\mathcal C_{ss}}(\mathsf S(X),S)
\cong
\operatorname{Hom}_{\mathcal C}(X,\iota S).
\]

Therefore $\mathsf S\dashv \iota$. Hence semisimple reconstruction defines a reflector from \(\mathcal C\) onto its full subcategory of semisimple objects. In particular, \(\mathsf S(X)\) is the canonical semisimple approximation to \(X\).
\end{proof}

The reflector property shows that semisimplification is universal among
morphisms into semisimple objects. The following result strengthens this
observation by showing that the semisimplification functor is uniquely
determined by this universal property.

\begin{theorem}
\label{thm:uniqueness_semisimplification}
Let $\mathsf T:\C\to\mathcal C_{ss}$ be a functor equipped with natural morphisms $\tau_X:X\to\mathsf T(X)$ such that every morphism from \(X\) into a semisimple object factors
uniquely through \(\tau_X\). Then $\mathsf S\cong\mathsf T$ naturally.
\end{theorem}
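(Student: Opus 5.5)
The plan is the standard uniqueness argument for universal arrows (reflections). For each \(X\in\C\), both \(q_X:X\to\mathsf S(X)\) and \(\tau_X:X\to\mathsf T(X)\) are initial among morphisms from \(X\) into semisimple objects. For \(q_X\) this is Theorem~\ref{thm:universal_semisimple_quotient}; for \(\tau_X\) it is the hypothesis. Since \(\mathsf T(X)\in\mathcal C_{ss}\), the universal property of \(q_X\) gives a unique \(\alpha_X:\mathsf S(X)\to\mathsf T(X)\) with \(\alpha_X\circ q_X=\tau_X\). Since \(\mathsf S(X)\) is semisimple, the hypothesis on \(\tau_X\) gives a unique \(\beta_X:\mathsf T(X)\to\mathsf S(X)\) with \(\beta_X\circ\tau_X=q_X\).

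Next I check that these maps are mutually inverse.
\begin{itemize}
\item We have \(\beta_X\alpha_X\circ q_X=q_X=\mathrm{id}\circ q_X\). The uniqueness clause of the factorization through \(q_X\) then forces \(\beta_X\alpha_X=\mathrm{id}_{\mathsf S(X)}\).
\item Symmetrically, \(\alpha_X\beta_X\circ\tau_X=\tau_X\). Uniqueness of factorization through \(\tau_X\), applied to the semisimple target \(\mathsf T(X)\), forces \(\alpha_X\beta_X=\mathrm{id}_{\mathsf T(X)}\).
\end{itemize}
In fact \(q_X\) is an epimorphism, so the first identity is even more direct.

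Then I prove naturality. For \(f:X\to Y\), I want \(\mathsf T(f)\circ\alpha_X=\alpha_Y\circ\mathsf S(f)\) as maps \(\mathsf S(X)\to\mathsf T(Y)\). Precompose both sides with \(q_X\).
\begin{itemize}
\item The left side gives \(\mathsf T(f)\tau_X\). By naturality of \(\tau\), this equals \(\tau_Y f\).
\item The right side gives \(\alpha_Y\,\mathsf S(f)\,q_X\). By Theorem~\ref{thm:functoriality} this is \(\alpha_Y q_Y f\), which equals \(\tau_Y f\).
\end{itemize}
Both composites agree after precomposition with \(q_X\), and their target \(\mathsf T(Y)\) is semisimple. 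Uniqueness of factorization through \(q_X\) (or simply that \(q_X\) is epi) gives equality. Hence \(\alpha:\mathsf S\Rightarrow\mathsf T\) is a natural isomorphism, with inverse \(\beta\), which is automatically natural.

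I do not expect a real obstacle. The only point needing care is that the hypothesis quantifies over all semisimple targets, including \(\mathsf S(X)\) itself, and that the functor structure of \(\mathsf T\) is compatible with \(\tau\). The word ``natural'' in the hypothesis is what supplies this compatibility. Without it, \(\mathsf T(f)\) would need to be redefined via the universal property, so I will make explicit that naturality of \(\tau\) is the input used in the square above.
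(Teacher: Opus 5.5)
Your proposal is correct and follows essentially the same route as the paper. You build the comparison maps from the two universal properties, show they are mutually inverse by uniqueness of factorization, and prove naturality by precomposing with $q_X$, using naturality of $\tau$ and the square $\mathsf S(f)\circ q_X=q_Y\circ f$; your remark that $q_X$ is an epimorphism is a harmless shortcut the paper does not use.
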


\begin{proof}
Let $q_X:X\to \mathsf S(X)$ denote the universal semisimple quotient map. Since \(\mathsf T(X)\) is semisimple and \(\tau_X:X\to \mathsf T(X)\) is a morphism from \(X\) into a
semisimple object, the universal property of \(q_X\) yields a unique morphism $a_X:\mathsf S(X)\to \mathsf T(X)$ such that $\tau_X=a_X\circ q_X$. Conversely, since \(\mathsf S(X)\) is semisimple, the assumed universal property of \(\tau_X\) yields a unique morphism $b_X:\mathsf T(X)\to \mathsf S(X)$ such that $q_X=b_X\circ \tau_X$. We claim that \(a_X\) and \(b_X\) are inverse isomorphisms. Indeed, $(b_X\circ a_X)\circ q_X = b_X\circ \tau_X = q_X$. Both \(b_X\circ a_X\) and \(\mathrm{id}_{\mathsf S(X)}\) are morphisms $\mathsf S(X)\to \mathsf S(X)$
whose composites with \(q_X\) equal \(q_X\). Since \(q_X\) is universal for morphisms from \(X\) into semisimple objects, such a factorization is unique. Hence
$b_X\circ a_X = \mathrm{id}_{\mathsf S(X)}$. Similarly, $(a_X\circ b_X)\circ \tau_X=a_X\circ q_X=\tau_X$. By the assumed uniqueness property of \(\tau_X\), we obtain
$a_X\circ b_X = \mathrm{id}_{\mathsf T(X)}$. Thus $a_X:\mathsf S(X)\to \mathsf T(X)$ is an isomorphism for every \(X\). It remains to verify naturality. Let
$f:X\to Y$ be a morphism in \(\mathcal C\). We must show that $a_Y\circ \mathsf S(f)=\mathsf T(f)\circ a_X$. Both sides are morphisms
$\mathsf S(X)\to \mathsf T(Y)$. After precomposition with \(q_X\), we obtain $(a_Y\circ \mathsf S(f))\circ q_X=a_Y\circ q_Y\circ f=\tau_Y\circ f$, where naturality of \(q\) gives
$\mathsf S(f)\circ q_X= q_Y\circ f$. On the other hand, $(\mathsf T(f)\circ a_X)\circ q_X =\mathsf T(f)\circ \tau_X=\tau_Y\circ f$, by naturality of \(\tau\). Thus both composites have the same precomposition with \(q_X\). By the universal property of \(q_X\), they are equal. Hence the isomorphisms $a_X:\mathsf S(X)\to \mathsf T(X)$ assemble into a natural isomorphism
$\mathsf S\cong \mathsf T$. Therefore the semisimplification functor \(\mathsf S\) is uniquely determined, up to natural isomorphism, by its universal factorization property.
\end{proof}

\subsection{Semisimple objects as the zero-obstruction regime}

The preceding results identify \(\mathcal O(X)\) as the canonical
obstruction to semisimple reconstruction. A natural question is
therefore whether the vanishing of this obstruction characterizes
semisimplicity itself. The next result shows that this is indeed the
case: an object has zero obstruction precisely when it is semisimple.
Thus semisimple objects constitute the zero-obstruction regime of the
theory.

\begin{theorem}
\label{thm:vanishing_criterion}
For \(X\in\C\), the following are equivalent:
\begin{enumerate}
\item \(X\) is semisimple;

\item \(\mathcal O(X)=0\);

\item the quotient map $X\to X/\mathcal O(X)$ is an isomorphism;

\item every nonzero subobject of \(X\) is detected by some morphism from
\(X\) into a semisimple object.
\end{enumerate}
\end{theorem}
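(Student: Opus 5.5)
I will prove the equivalences through the cycle (1)\(\Rightarrow\)(2)\(\Rightarrow\)(3)\(\Rightarrow\)(1), and then attach (4) to (2). Throughout I rely on the characterization in Theorem~\ref{thm:obstruction_characterization}, in particular condition (3): \(\mathcal O(X)\) is the smallest subobject of \(X\) with semisimple quotient.

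For (1)\(\Rightarrow\)(2): if \(X\) is semisimple, the identity \(\mathrm{id}_X:X\to X\) is a morphism into a semisimple object. Its kernel is \(0\), so the defining intersection gives \(\mathcal O(X)\subseteq\ker(\mathrm{id}_X)=0\). Equivalently, \(X/0\cong X\) is semisimple, so condition (3) of Theorem~\ref{thm:obstruction_characterization} gives \(\mathcal O(X)\subseteq 0\).

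For (2)\(\Rightarrow\)(3): the quotient map \(q_X\) is always an epimorphism with kernel \(\mathcal O(X)\). If \(\mathcal O(X)=0\), then \(q_X\) is also a monomorphism, and in an abelian category this makes it an isomorphism.

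For (3)\(\Rightarrow\)(1): by Theorem~\ref{thm:universal_semisimple_quotient}, \(X/\mathcal O(X)\) is semisimple. If \(q_X\) is an isomorphism, then \(X\) is isomorphic to a semisimple object. Since \(\mathcal C_{ss}\) is a full subcategory closed under isomorphism, \(X\) is semisimple.

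For (2)\(\Leftrightarrow\)(4), I first make precise that a subobject \(N\subseteq X\) is \emph{detected} by \(f:X\to S\) when \(N\not\subseteq\ker(f)\), i.e.\ \(f|_N\neq 0\).
\begin{itemize}
\item[] If \(\mathcal O(X)=0\) and \(N\neq 0\), suppose no such \(f\) detects \(N\). Then \(N\subseteq\ker(f)\) for every \(f\), so \(N\subseteq\mathcal O(X)=0\), a contradiction.
\item[] Conversely, assume (4) and suppose \(\mathcal O(X)\neq 0\). Condition (1) of Theorem~\ref{thm:obstruction_characterization} says \(\mathcal O(X)\) is annihilated by every morphism into a semisimple object. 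So \(\mathcal O(X)\) is a nonzero subobject detected by no such morphism, contradicting (4).
\end{itemize}

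None of these steps is a real obstacle; the substantive input, semisimplicity of \(X/\mathcal O(X)\), is already supplied by Theorem~\ref{thm:universal_semisimple_quotient}. The only points needing care are fixing the meaning of ``detected'' in (4) as above, and noting that semisimplicity is invariant under isomorphism for (3)\(\Rightarrow\)(1).
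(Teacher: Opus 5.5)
Your proposal is correct and follows essentially the same route as the paper's proof: the cycle (1)$\Rightarrow$(2)$\Rightarrow$(3)$\Rightarrow$(1) via the identity morphism, the monic-plus-epic argument, and Theorem~\ref{thm:universal_semisimple_quotient}, followed by the same two contradiction arguments linking (2) and (4). Your explicit definition of ``detected'' and your remark that semisimplicity is invariant under isomorphism are small clarifications that the paper leaves implicit.
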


\begin{proof}
We prove the equivalence of the four conditions. Suppose first that \(X\) is semisimple. Then the identity morphism $\mathrm{id}_X:X\to X$ is a morphism from \(X\) into a semisimple object. Since
\[
\mathcal O(X)
=
\bigcap_{f:X\to S,\;S\in\mathcal C_{ss}}\ker(f),
\]
and \(\ker(\mathrm{id}_X)=0\), it follows that $\mathcal O(X)=0$. Thus \textup{(1)} implies \textup{(2)}. Now suppose that $\mathcal O(X)=0$. The universal semisimple quotient map
$q_X:X\to X/\mathcal O(X)$ is an epimorphism, and its kernel is \(\mathcal O(X)=0\), so it is also a monomorphism. Since \(\mathcal C\) is abelian, every morphism that is both monic and epic is an isomorphism. Hence \(q_X\) is an isomorphism. Thus \textup{(2)} implies \textup{(3)}. If the quotient map $X\to X/\mathcal O(X)$ is an isomorphism, then \(X\) is isomorphic to $X/\mathcal O(X)$. By Theorem~\ref{thm:universal_semisimple_quotient}, this quotient is semisimple. Hence \(X\) is semisimple. Thus \textup{(3)} implies \textup{(1)}. It remains to relate these conditions to \textup{(4)}. Suppose first that $\mathcal O(X)=0$. Let \(N\subseteq X\) be a nonzero subobject. If every morphism $f:X\to S$ with \(S\in\mathcal C_{ss}\) satisfied $N\subseteq \ker(f)$, then
\[
N\subseteq
\bigcap_{f:X\to S,\;S\in\mathcal C_{ss}}\ker(f)
=
\mathcal O(X)
=
0,
\]
contradicting \(N\neq 0\). Therefore there exists a morphism $f:X\to S$ into a semisimple object such that $N\not\subseteq \ker(f)$. Hence \textup{(2)} implies \textup{(4)}. Conversely, suppose \textup{(4)} holds. If $\mathcal O(X)\neq 0$, then taking $N=\mathcal O(X)$ gives a nonzero subobject of \(X\). By definition, $\mathcal O(X)\subseteq \ker(f)$
for every morphism $f:X\to S$ with \(S\in\mathcal C_{ss}\). This contradicts \textup{(4)}. Therefore $\mathcal O(X)=0$. Thus \textup{(4)} implies \textup{(2)}. Combining the implications proves that all four conditions are equivalent.
\end{proof}

The preceding criterion identifies semisimplicity at the level of
individual objects. The next result shows that the same phenomenon
extends to the category as a whole: the semisimplification functor is
exact precisely when no nontrivial semisimple obstruction exists
anywhere in the category.

The semisimple reconstruction criterion identifies semisimple objects
as precisely those with vanishing obstruction. The next result shows
that this phenomenon extends from individual objects to the ambient
category itself. In particular, exactness of the semisimplification
functor is equivalent to the absence of nontrivial obstruction
throughout the category.

\begin{theorem}
\label{thm:global_vanishing_criterion}
The semisimplification reflector $\mathsf S:\C\to\mathcal C_{ss}$ is exact if and only if \(\C\) is semisimple.
\end{theorem}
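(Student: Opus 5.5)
The direction ``\(\C\) semisimple \(\Rightarrow\) \(\mathsf S\) exact'' is immediate from Theorem~\ref{thm:vanishing_criterion}. If every object is semisimple, then \(\mathcal O(X)=0\) for all \(X\). Each \(q_X\) is then an isomorphism, so the unit \(\mathrm{id}\to\iota\mathsf S\) is a natural isomorphism. Hence \(\mathsf S\) is naturally isomorphic to the identity functor, viewed as a functor into \(\C_{ss}=\C\), and is therefore exact. One should note that exactness is measured in \(\C_{ss}\), which is an abelian subcategory closed under kernels and cokernels in \(\C\); this holds because subobjects and quotients of semisimple objects are semisimple, as used in Theorems~\ref{thm:universal_semisimple_quotient} and~\ref{thm:obstruction_characterization}. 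Thus a sequence in \(\C_{ss}\) is exact there if and only if it is exact in \(\C\).

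For the converse, assume \(\mathsf S\) is exact. I will show by induction on length that every object is semisimple. Simple objects are semisimple. For \(X\) of length \(\ge 2\), choose a simple subobject \(T\subseteq X\), which exists by finite length, and form the short exact sequence \(0\to T\to X\to X/T\to 0\). By induction \(X/T\) is semisimple, and \(T\) is simple. Applying the exact functor \(\mathsf S\) and using \(\mathsf S(T)\cong T\) and \(\mathsf S(X/T)\cong X/T\) (Theorem~\ref{thm:vanishing_criterion}) gives an exact sequence
\[
0\to T\to \mathsf S(X)\to X/T\to 0 .
\]
The naturality square of Theorem~\ref{thm:functoriality} relates this sequence to the original one via the unit maps. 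The unit maps on \(T\) and on \(X/T\) are isomorphisms, and the unit on \(X\) is \(q_X\). By the five lemma, \(q_X\) is an isomorphism, so \(X\cong\mathsf S(X)\) is semisimple. This closes the induction.

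A cleaner alternative is a length count. Exactness of \(\mathsf S\) makes \(\ell(\mathsf S(-))\) additive on short exact sequences. It agrees with \(\ell\) on simple objects, and by Jordan--Hölder it therefore agrees with \(\ell\) on all objects. Consequently \(\ell(\mathcal O(X))=\ell(X)-\ell(\mathsf S X)=0\), and so \(\mathcal O(X)=0\) for every \(X\).

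The main obstacle is the bookkeeping in the converse: one must check that the morphisms in the applied sequence really are \(\mathsf S\) of the original maps, compatible with the unit. Only then can the five lemma be invoked in \(\C\). This is where exactness in \(\C_{ss}\) has to be transferred to exactness in \(\C\), via the closure property noted in the first paragraph. The length-counting argument sidesteps this by needing only additivity of length, so I would present it as the primary route and keep the five-lemma argument as a remark.
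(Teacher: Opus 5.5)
Your proof is correct; the forward direction is the paper's, but the converse takes a different route. The paper applies $\mathsf S$ once, to the canonical sequence $0\to\mathcal O(X)\to X\to\mathsf S(X)\to 0$. Because the reflector is idempotent, $\mathsf S(q_X)$ is an isomorphism, so exactness gives $\mathsf S(\mathcal O(X))=0$. It then notes that $\mathsf S$ kills no nonzero object: a nonzero $Y$ has a simple quotient, which is a nonzero map into a semisimple object and must factor through $q_Y$.

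You instead start from $\mathsf S(L)\cong L$ on simple objects and show that an exact $\mathsf S$ preserves composition length, or, in your five-lemma version, that every unit $q_X$ is an isomorphism. This is a slightly stronger intermediate statement, and it makes $\mathcal O(X)=0$ immediate.

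The paper's argument is shorter and needs no induction. Since it uses exactness only inside $\C_{ss}$, it never has to compare exactness in $\C_{ss}$ with exactness in $\C$. On the other hand, it relies on idempotence of $\mathsf S$, which it asserts without comment; this follows from $\mathsf S(q_X)\circ q_X=q_{\mathsf S(X)}\circ q_X$ together with $q_X$ being epic.

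Your argument does not need idempotence. It does need exactness to transfer from $\C_{ss}$ to $\C$, so that you can use additivity of length or the five lemma in $\C$. You correctly supply this through the closure of $\C_{ss}$ under subobjects and quotients.
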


\begin{proof}
If \(\mathcal C\) is semisimple, then every object is semisimple. Hence $\mathcal O(X)=0$ for every \(X\in\mathcal C\), and therefore $\mathsf S(X)=X$. Thus \(\mathsf S\) is naturally isomorphic to the identity functor and is therefore exact. Conversely, suppose that \(\mathsf S\) is exact. For any object
\(X\in\mathcal C\), consider the canonical short exact sequence
\[
0
\to
\mathcal O(X)
\to
X
\xrightarrow{q_X}
\mathsf S(X)
\to
0.
\]

Applying the exact functor \(\mathsf S\) yields an exact sequence
\[
0
\to
\mathsf S(\mathcal O(X))
\to
\mathsf S(X)
\xrightarrow{\mathsf S(q_X)}
\mathsf S(\mathsf S(X))
\to
0.
\]

Since \(\mathsf S(X)\) is semisimple, it is fixed by the reflector: $\mathsf S(\mathsf S(X)) \cong \mathsf S(X)$. Moreover, \(\mathsf S(q_X)\) is an isomorphism. Hence its kernel is zero, and exactness implies $\mathsf S(\mathcal O(X))=0$. Now let \(Y\) be a nonzero object of \(\mathcal C\). Since \(\mathcal C\) has finite length, \(Y\) admits a simple quotient. Consequently there
exists a nonzero morphism $Y\to S$ with \(S\) simple, hence semisimple. By the universal semisimple quotient property, $\mathsf S(Y)\neq 0$. Applying this observation to
$Y=\mathcal O(X)$, the equality $\mathsf S(\mathcal O(X))=0$ forces $\mathcal O(X)=0$. Thus $\mathcal O(X)=0$ for every \(X\in\mathcal C\). By Theorem~\ref{thm:vanishing_criterion}, every object of \(\mathcal C\) is semisimple. Hence \(\mathcal C\) is semisimple.
\end{proof}

\section{Persistence and Obstruction Filtrations}
\label{sec:persistence}

The reconstruction theory developed in the previous section identifies
\(\mathcal O(X)\) as the canonical obstruction to semisimple
reconstruction. However, removing this obstruction need not eliminate
all non-semisimple structure at once. It is therefore natural to ask
how obstruction persists under repeated semisimplification. Iterating
the obstruction construction gives rise to a canonical filtration that
measures successive layers of obstruction and records the depth of
non-semisimple structure within an object.

\subsection{The obstruction filtration}

The basic invariant governing this persistence phenomenon is the
following filtration.

\begin{definition}
Define $\mathcal O^{0}(X)=X$, and for \(n\geq 0\), $\mathcal O^{n+1}(X) =\mathcal O\!\left(\mathcal O^{n}(X)\right)$. The resulting descending filtration
$X
\supseteq
\mathcal O(X)
\supseteq
\mathcal O^{2}(X)
\supseteq
\cdots$ is called the \emph{obstruction filtration} of \(X\).
\end{definition}

The obstruction filtration records successive layers of semisimple
obstruction. Its length measures how many iterations of the obstruction
construction are required before all obstruction disappears.

\begin{definition}
The \emph{obstruction depth} of \(X\) is $\operatorname{depth}_{\mathcal O}(X)=\min\{d\geq 0:\mathcal O^{d}(X)=0\}$, when such a \(d\) exists.
\end{definition}

\subsection{Nilpotence of obstruction}

The obstruction filtration measures how semisimple obstruction persists
under repeated semisimplification. For objects of finite length, this
process cannot continue indefinitely. The next result shows that the
obstruction filtration is necessarily finite and therefore defines a
numerical measure of obstruction complexity.

\begin{theorem}
\label{thm:obstruction_nilpotence}
If \(\C\) is finite-length, then for every \(X\in\C\) the obstruction
filtration terminates after finitely many steps.
\end{theorem}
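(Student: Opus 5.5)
The plan is to show that each nonzero step of the filtration strictly decreases length, so the filtration must reach zero after at most \(\ell(X)\) steps, where \(\ell(X)\) denotes the composition length of \(X\). The key claim is: for every nonzero \(Y\in\C\), \(\mathcal O(Y)\subsetneq Y\). Granting this, apply it to \(Y=\mathcal O^{n}(X)\). Whenever \(\mathcal O^n(X)\neq 0\) we get
\[
\ell\bigl(\mathcal O^{n+1}(X)\bigr)<\ell\bigl(\mathcal O^{n}(X)\bigr).
\]
Since lengths are nonnegative integers starting from \(\ell(X)<\infty\), some \(\mathcal O^{d}(X)\) with \(d\le \ell(X)\) must vanish. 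Thereafter the filtration is constantly zero, because \(\mathcal O(0)=0\) by Theorem~\ref{thm:vanishing_criterion} (\(0\) is semisimple). In particular \(\operatorname{depth}_{\mathcal O}(X)\) exists and satisfies \(\operatorname{depth}_{\mathcal O}(X)\le\ell(X)\).

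For the key claim, let \(Y\neq 0\). Since \(Y\) has finite length, it has a maximal proper subobject \(M\). Concretely, take the penultimate term of a composition series, or choose a proper subobject of maximal length. Then \(Y/M\) is simple, hence semisimple. The quotient map \(Y\to Y/M\) is therefore a morphism into a semisimple object, so \(\mathcal O(Y)\subseteq M\subsetneq Y\). Alternatively, one can invoke Theorem~\ref{thm:obstruction_characterization}(3) with \(N=M\). This is the same observation already used in the proof of Theorem~\ref{thm:global_vanishing_criterion}.

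I expect no real obstacle. The only point needing care is that a proper subobject has strictly smaller length. This holds by additivity of length on the short exact sequence \(0\to\mathcal O(Y)\to Y\to Y/\mathcal O(Y)\to 0\), since \(Y/\mathcal O(Y)\neq 0\) and hence has length at least one.
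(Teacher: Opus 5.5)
Your proof is correct and uses the same key fact as the paper: a nonzero finite-length object $Y$ has a simple quotient, which forces $\mathcal O(Y)\subsetneq Y$. The paper instead uses the descending chain condition to find a stable term $\mathcal O^{N}(X)=\mathcal O^{N+1}(X)$ and derives a contradiction there, whereas your length count reaches the same conclusion directly and also gives the explicit bound $\operatorname{depth}_{\mathcal O}(X)\le\ell(X)$.
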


\begin{proof}
Since \(\C\) is finite-length, every descending chain of subobjects of
\(X\) stabilizes. Therefore the obstruction filtration
$X \supseteq \mathcal O(X) \supseteq \mathcal O^{2}(X) \supseteq \cdots$
eventually stabilizes. Thus there exists \(N\) such that
$\mathcal O^{N}(X)=\mathcal O^{N+1}(X)$. Let
$Y=\mathcal O^{N}(X)$. Then $\mathcal O(Y)=Y$. We claim that \(Y=0\).
Suppose instead that \(Y\neq 0\). Since \(Y\) has finite length, it
admits a simple quotient $Y\to L$. Because \(L\) is simple, it is
semisimple. However, if $f:Y\to S$ is any morphism into a semisimple
object, then by definition of the obstruction subobject,
$\mathcal O(Y)\subseteq\ker(f)$. Since $\mathcal O(Y)=Y$, it follows
that $Y\subseteq\ker(f)$, and hence $\ker(f)=Y$. Therefore \(f=0\).
Thus every morphism from \(Y\) into a semisimple object is zero. This
contradicts the existence of the nonzero quotient map $Y\to L$. Hence
\(Y=0\). Therefore $\mathcal O^{N}(X)=0$, and the obstruction filtration
terminates after finitely many steps.
\end{proof}

The nilpotence theorem shows that obstruction can persist only for
finitely many stages. Consequently, every object admits a finite
obstruction depth, and the obstruction filtration provides a finite
measure of the complexity of semisimple reconstruction.

\subsection{Persistence under extensions}

The obstruction filtration provides a quantitative measure of how far an
object lies from the semisimple regime. Having established its
relationship with classical Loewy theory, we now investigate how
obstruction behaves under the fundamental constructions of homological
algebra. The next results show that obstruction depth exhibits a
persistence phenomenon under extensions and quotients, reflecting the
way semisimple obstruction propagates through exact sequences.

The first persistence theorem shows that obstruction depth is controlled
under extensions. In particular, the obstruction complexity of an
extension cannot exceed that of its constituent pieces by an arbitrary
amount. Thus obstruction depth behaves as a robust measure of
non-semisimple structure across short exact sequences.

\begin{proposition}
\label{prop:extension_persistence}
Let $0\to A\to E\to B\to 0$ be a short exact sequence in \(\C\). Then the obstruction depth of \(E\)
is bounded in terms of the obstruction depths of \(A\) and \(B\). More
precisely,
\[
\operatorname{depth}_{\mathcal O}(E)
\leq
\operatorname{depth}_{\mathcal O}(A)
+
\operatorname{depth}_{\mathcal O}(B).
\]
\end{proposition}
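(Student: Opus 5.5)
The plan is to compare the obstruction filtration with an arbitrary filtration having semisimple layers, and then splice the filtrations of \(A\) and \(B\) together. Write \(a=\operatorname{depth}_{\mathcal O}(A)\) and \(b=\operatorname{depth}_{\mathcal O}(B)\); both exist by Theorem~\ref{thm:obstruction_nilpotence}.

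The first step is a \emph{comparison lemma}. Suppose \(X=F_0\supseteq F_1\supseteq\cdots\supseteq F_n=0\) is a filtration with every quotient \(F_i/F_{i+1}\) semisimple. Then \(\mathcal O^i(X)\subseteq F_i\) for all \(i\), and hence \(\operatorname{depth}_{\mathcal O}(X)\leq n\). I would prove this by induction on \(i\), with \(i=0\) trivial. Assume \(\mathcal O^i(X)\subseteq F_i\). Consider the composite
\[
\mathcal O^i(X)\hookrightarrow F_i\to F_i/F_{i+1}.
\]
This is a morphism into a semisimple object, so by definition it kills \(\mathcal O(\mathcal O^i(X))=\mathcal O^{i+1}(X)\). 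That gives \(\mathcal O^{i+1}(X)\subseteq F_{i+1}\). Equivalently, one can view this as functoriality (Theorem~\ref{thm:functoriality}) applied to the inclusion \(\mathcal O^i(X)\hookrightarrow F_i\), together with \(\mathcal O(F_i)\subseteq F_{i+1}\), which is Theorem~\ref{thm:obstruction_characterization}(3).

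The second step builds such a filtration on \(E\) of length \(a+b\). Let \(p:E\to B\) be the epimorphism and identify \(A\) with \(\ker p\). For \(0\le j\le b\), set \(G_j=p^{-1}(\mathcal O^j(B))\). Then \(G_0=E\) and \(G_b=p^{-1}(0)=A\). Each \(G_j\) contains \(A\), and \(G_j/G_{j+1}\cong \mathcal O^j(B)/\mathcal O^{j+1}(B)\) by the third isomorphism theorem. This layer is semisimple by Theorem~\ref{thm:universal_semisimple_quotient} applied to \(\mathcal O^j(B)\). Continue with \(G_{b+k}=\mathcal O^k(A)\) for \(0\le k\le a\). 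These layers are \(\mathcal O^k(A)/\mathcal O^{k+1}(A)\), again semisimple, and \(G_{a+b}=0\). Applying the comparison lemma to \(E\) gives \(\mathcal O^{a+b}(E)\subseteq G_{a+b}=0\), which is the claimed bound.

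The only point needing care is the inductive step of the comparison lemma. One must use that \(\mathcal O^{i+1}(X)\) is defined intrinsically as \(\mathcal O\) of the subobject \(\mathcal O^i(X)\), and not as \(\mathcal O^i(X)\cap\mathcal O(F_i)\) or the like. The argument above uses only the definition of \(\mathcal O\) applied to the object \(\mathcal O^i(X)\), so it goes through. The remaining verifications, such as the preimage isomorphisms, are routine abelian-category facts.
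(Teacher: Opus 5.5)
Your proof is correct, but it is organized differently from the paper's.

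The paper argues directly by iterated functoriality. It shows $f(\mathcal O^n(X))\subseteq\mathcal O^n(Y)$ for every morphism $f$, and applies this to $p:E\to B$ to get $\mathcal O^b(E)\subseteq A$. It then applies the same fact to the inclusion $\mathcal O^b(E)\hookrightarrow A$, giving $\mathcal O^{a+b}(E)=\mathcal O^a(\mathcal O^b(E))\subseteq\mathcal O^a(A)=0$.

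You instead isolate a comparison lemma: the obstruction filtration descends at least as fast as any filtration with semisimple subquotients. You then splice the pulled-back obstruction filtration of $B$ with that of $A$. The inclusions you produce are in fact the same ones:
\begin{itemize}
\item on the first $b$ steps, $\mathcal O^j(E)\subseteq p^{-1}(\mathcal O^j(B))$ is exactly $p(\mathcal O^j(E))\subseteq\mathcal O^j(B)$;
\item on the last $a$ steps, $\mathcal O^{b+k}(E)\subseteq\mathcal O^k(A)$ is exactly functoriality along the inclusion $\mathcal O^b(E)\hookrightarrow A$.
\end{itemize}

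The paper's route is shorter, since it needs no preimage bookkeeping or third isomorphism theorem. Yours yields a more reusable statement. The obstruction filtration itself has semisimple layers, so your lemma shows that $\operatorname{depth}_{\mathcal O}(X)$ is the minimal length of a filtration of $X$ with semisimple subquotients. This is the categorical analogue of the usual characterization of Loewy length. It gives the extension bound at once, and also monotonicity of depth under subobjects and quotients: intersect the obstruction filtration with a subobject, or push it forward to a quotient.
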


\begin{proof}
Let $p:E\to B$ be the quotient morphism. The guiding idea is that obstruction surviving in \(E\) must either survive in the quotient \(B\) or be confined to the subobject \(A\). We first observe that the obstruction filtration is functorial with respect to morphisms. Indeed, let $f:X\to Y$ be any morphism, and let $q_Y:Y\to Y/\mathcal O(Y)$ be the universal semisimple quotient. Since $Y/\mathcal O(Y)$ is semisimple, the composite $q_Y\circ f:X\to Y/\mathcal O(Y)$ has $\mathcal O(X)$ contained in its kernel. Hence $q_Y\circ f$ vanishes on $\mathcal O(X)$, so $f(\mathcal O(X))\subseteq\ker(q_Y)=\mathcal O(Y)$. Iterating this argument gives $f(\mathcal O^{n}(X))\subseteq\mathcal O^{n}(Y)$ for every \(n\geq 0\).

Applying this to $p:E\to B$, for every \(n\geq 0\), $p\bigl(\mathcal O^{n}(E)\bigr)\subseteq\mathcal O^{n}(B)$. Let $b=\operatorname{depth}_{\mathcal O}(B)$. Then $\mathcal O^{b}(B)=0$. Hence $p\bigl(\mathcal O^{b}(E)\bigr)=0$, and therefore $\mathcal O^{b}(E)\subseteq\ker(p)=A$. Now let $a=\operatorname{depth}_{\mathcal O}(A)$. Then $\mathcal O^{a}(A)=0$. Applying the functoriality of the obstruction filtration to the inclusion $\mathcal O^{b}(E)\subseteq A$ gives $\mathcal O^{a}\bigl(\mathcal O^{b}(E)\bigr)\subseteq\mathcal O^{a}(A)=0$. By definition of the iterated obstruction filtration, $\mathcal O^{a}\bigl(\mathcal O^{b}(E)\bigr)=\mathcal O^{a+b}(E)$. Therefore $\mathcal O^{a+b}(E)=0$. Thus all obstruction in \(E\) is eliminated after at most \(a+b\) iterations of the obstruction construction. Consequently,
\[
\operatorname{depth}_{\mathcal O}(E)
\leq
a+b
=
\operatorname{depth}_{\mathcal O}(A)
+
\operatorname{depth}_{\mathcal O}(B).
\]
\end{proof}

The extension estimate controls obstruction depth globally. The next
result shows that individual layers of the obstruction filtration are
also preserved under passage to quotients. Thus obstruction persists
functorially not only at the level of depth, but throughout the entire
filtration.

\begin{proposition}
\label{prop:quotient_persistence}
Let $q:X\to Y$ be an epimorphism. Then $q\bigl(\mathcal O^{n}(X)\bigr) \subseteq \mathcal O^{n}(Y)$ for all \(n\ge 0\).
\end{proposition}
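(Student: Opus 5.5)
The plan is to prove the slightly stronger statement that every morphism \(f:X\to Y\), epimorphic or not, satisfies \(f(\mathcal O^{n}(X))\subseteq\mathcal O^{n}(Y)\) for all \(n\ge 0\). The proposition is then the special case \(f=q\). The argument is induction on \(n\), with Theorem~\ref{thm:functoriality} supplying the single-step inclusion. This is the same argument sketched inside the proof of Proposition~\ref{prop:extension_persistence}; here we make the induction explicit.

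The base case \(n=0\) is immediate: \(\mathcal O^{0}(X)=X\) and \(\mathcal O^{0}(Y)=Y\), so \(f(X)\subseteq Y\) trivially. For the inductive step, assume \(f(\mathcal O^{n}(X))\subseteq\mathcal O^{n}(Y)\). Then \(f\) restricts to a morphism
\[
f_n:\mathcal O^{n}(X)\longrightarrow\mathcal O^{n}(Y).
\]
Concretely, the composite \(\mathcal O^{n}(X)\hookrightarrow X\xrightarrow{f}Y\) has image contained in the subobject \(\mathcal O^{n}(Y)\). In an abelian category it therefore factors uniquely through the monomorphism \(\mathcal O^{n}(Y)\hookrightarrow Y\). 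Applying Theorem~\ref{thm:functoriality} to \(f_n\) gives
\[
f_n\bigl(\mathcal O(\mathcal O^{n}(X))\bigr)\subseteq\mathcal O(\mathcal O^{n}(Y)),
\]
that is, \(f_n(\mathcal O^{n+1}(X))\subseteq\mathcal O^{n+1}(Y)\). Since the image of a subobject under \(f_n\), viewed inside \(Y\), coincides with its image under \(f\), this yields \(f(\mathcal O^{n+1}(X))\subseteq\mathcal O^{n+1}(Y)\), which completes the induction.

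The only step needing care is this last compatibility. The obstruction \(\mathcal O^{n+1}(X)\) is computed intrinsically in \(\mathcal O^{n}(X)\), using morphisms out of \(\mathcal O^{n}(X)\), not out of \(X\). So the induction must pass through the restricted map \(f_n\) rather than apply Theorem~\ref{thm:functoriality} to \(f\) directly. The identification of images then uses only that composing with monomorphisms preserves images.

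Surjectivity of \(q\) is not used, so the statement holds for arbitrary morphisms. One could remark that for epimorphisms equality holds at \(n=1\), since a semisimple quotient of \(Y\) pulls back to one of \(X\). Equality fails at higher \(n\) in general, and in any case it is not needed for the stated inclusion.
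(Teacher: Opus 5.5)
Your argument is correct and is essentially the paper's proof: induction on \(n\), restricting to \(q_n:\mathcal O^{n}(X)\to\mathcal O^{n}(Y)\) and applying Theorem~\ref{thm:functoriality} to \(q_n\). You are also right that the epimorphism hypothesis is never needed; the paper uses it only in the trivial base case.

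Your closing aside is wrong, however: equality does not fail at higher \(n\). For an epimorphism \(q\), \(Y/q(\mathcal O(X))\) is a quotient of the semisimple object \(X/\mathcal O(X)\), so Theorem~\ref{thm:obstruction_characterization}(3) gives \(q(\mathcal O(X))=\mathcal O(Y)\). This is a push-forward argument, not the pull-back one you cite, which only gives the inclusion. Hence each \(q_n\) is again an epimorphism, and induction yields \(q(\mathcal O^{n}(X))=\mathcal O^{n}(Y)\) for every \(n\).
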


\begin{proof}
The statement expresses the compatibility of the obstruction filtration with epimorphisms. We proceed by induction on \(n\). For \(n=0\), we have
$\mathcal O^{0}(X)=X$ and $\mathcal O^{0}(Y)=Y$. Since \(q:X\to Y\) is an epimorphism, $q\bigl(\mathcal O^{0}(X)\bigr)=q(X)=Y =\mathcal O^{0}(Y)$,
so the claim holds. Now assume that $q\bigl(\mathcal O^{n}(X)\bigr) \subseteq \mathcal O^{n}(Y)$. Restricting \(q\) yields a morphism
$q_n: \mathcal O^{n}(X) \longrightarrow \mathcal O^{n}(Y)$. By functoriality of the obstruction construction,
$q_n\bigl( \mathcal O(\mathcal O^{n}(X)) \bigr) \subseteq \mathcal O(\mathcal O^{n}(Y))$. Equivalently,
$q\bigl(\mathcal O^{n+1}(X)\bigr) \subseteq \mathcal O^{n+1}(Y)$. Thus the claim holds for \(n+1\), and the result follows by induction.
\end{proof}

Together, the extension and quotient persistence theorems show that
semisimple obstruction is compatible with the basic exact operations of
finite-length abelian categories. Consequently, obstruction depth is not
merely a numerical invariant but part of a functorial filtration theory.
The obstruction filtration therefore provides a systematic framework for
tracking how non-semisimple structure propagates through extensions,
subobjects, and quotients.

\subsection{Persistence Structure}

The preceding results have introduced several manifestations of
semisimple obstruction: the obstruction subobject, the universal
semisimple quotient, the semisimplification reflector, and the
obstruction filtration. The next theorem shows that these are not
independent constructions. Rather, each encodes the same underlying
obstruction-theoretic information and canonically determines the
others. Thus semisimple obstruction theory may be formulated
equivalently from any of these perspectives.

\begin{theorem}
\label{thm:persistence_structure}
The obstruction assignment, universal semisimple quotient maps,
semisimplification reflector, and obstruction filtration canonically
determine one another, up to the unique canonical isomorphisms arising
from their universal properties.
\end{theorem}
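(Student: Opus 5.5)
The plan is to treat the four structures as data attached to $\C$ and to build an explicit cycle of reconstructions: obstruction assignment $\Rightarrow$ universal quotient maps $\Rightarrow$ reflector $\Rightarrow$ obstruction assignment. I will then show separately that the filtration and the assignment determine each other. The key point is that every passage is either a kernel, a cokernel, or an application of a universal property. Composites around the cycle therefore return the original data up to the unique isomorphisms supplied by Theorems~\ref{thm:universal_semisimple_quotient}, \ref{thm:obstruction_characterization} and~\ref{thm:uniqueness_semisimplification}.

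\textbf{Step 1 (assignment to quotient maps).} Given $X\mapsto\mathcal O(X)$, take $q_X:X\to X/\mathcal O(X)$. This map is universal by Theorem~\ref{thm:universal_semisimple_quotient}.

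\textbf{Step 2 (quotient maps to assignment).} Given any family of universal semisimple maps $\tau_X:X\to T_X$, set $R_X=\ker\tau_X$. I first show $\tau_X$ is an epimorphism. If $u\circ\tau_X=v\circ\tau_X$ with $u,v:T_X\to S$, then $u=v$ by uniqueness of factorization, at least when $S$ is semisimple. For a general target I argue instead that $\operatorname{im}\tau_X$ is a subobject of a semisimple object, hence semisimple. Then $\tau_X$ factors through $\operatorname{im}\tau_X$, and uniqueness forces $\operatorname{im}\tau_X=T_X$. So $X/R_X\cong T_X$ is semisimple and universal, and condition~(2) of Theorem~\ref{thm:obstruction_characterization} gives $R_X=\mathcal O(X)$. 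Uniqueness of the comparison isomorphism comes from the universal property, as in the proof of Theorem~\ref{thm:uniqueness_semisimplification}.

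\textbf{Step 3 (quotient maps and reflector).} The maps $q_X$ assemble into a functor $\mathsf S$ with unit $q$ by Theorems~\ref{thm:functoriality} and~\ref{thm:semisimplification_reflector}. Conversely, any left adjoint to $\iota$ has a unit whose components are universal maps into $\mathcal C_{ss}$. Theorem~\ref{thm:uniqueness_semisimplification} identifies such a reflector with $\mathsf S$ up to unique natural isomorphism, and then Step~2 recovers $\mathcal O$ as $\ker$ of the unit.

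\textbf{Step 4 (filtration).} The assignment determines the filtration by iteration. In the other direction, the filtration determines the assignment by reading off its first stage, $\mathcal O^1(X)=\mathcal O(X)$. I will note that ``filtration'' must mean the functorial family $X\mapsto(\mathcal O^n(X))_n$, so the first stage is part of the data. By Proposition~\ref{prop:quotient_persistence} and the functoriality argument in Proposition~\ref{prop:extension_persistence}, morphisms respect every stage, so the reconstruction is natural.

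I expect the main obstacle to be the precise formulation rather than any computation: the statement says ``canonically determine one another'', and I must fix what data each side consists of, for instance a reflector given together with its unit. Once that is fixed, the remaining work is to check that the round-trip composites are identities up to the unique isomorphisms, and these follow from the uniqueness clauses already proved. The only genuinely new verification is the epimorphism argument in Step~2, and I would handle it first.
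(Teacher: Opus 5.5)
Your proposal is correct and follows essentially the same route as the paper: obstruction assignment $\to$ quotient maps $q_X$ $\to$ reflector $\to$ kernels of the unit, with the filtration obtained by iteration and the assignment recovered as the stage $\mathcal O^1$. Your Step~2 justifies something the paper only asserts (that the kernels of an arbitrary unit recover $\mathcal O(X)$), but it can be shortened: the objectwise part of the proof of Theorem~\ref{thm:uniqueness_semisimplification} already gives $\tau_X=a_X\circ q_X$ with $a_X$ an isomorphism, so $\ker\tau_X=\ker q_X=\mathcal O(X)$ directly, with no separate epimorphism argument and no need to treat condition~(2) of Theorem~\ref{thm:obstruction_characterization} on its own as a uniqueness criterion (that theorem states uniqueness only for the three conditions together).
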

\begin{proof}
The obstruction assignment determines the universal semisimple quotient maps by assigning to each object \(X\) the quotient morphism
$q_X: X\to X/\mathcal O(X)$. By Theorem~\ref{thm:universal_semisimple_quotient}, the quotient $X/\mathcal O(X)$ is semisimple and universal among morphisms from \(X\) into semisimple
objects. Conversely, the universal semisimple quotient maps determine the obstruction subobjects as their kernels: $\mathcal O(X) = \ker(q_X)$. Thus the obstruction assignment and the universal semisimple quotient data determine one another. The same quotient maps define the semisimplification functor $\mathsf S(X) = X/\mathcal O(X)$. By Theorem~\ref{thm:functoriality}, every morphism
$f:X\to Y$ induces a morphism $\mathsf S(f): \mathsf S(X)\to\mathsf S(Y)$. The identity and composition laws follow from the uniqueness of the induced morphisms, so the quotient construction determines a functor $\mathsf S:\C\to\mathcal C_{ss}$. By Theorem~\ref{thm:semisimplification_reflector}, this functor is left adjoint to the inclusion $\iota: \mathcal C_{ss}\hookrightarrow\C$. Conversely, the reflector determines the universal quotient maps through its unit morphisms $\eta_X: X\to\mathsf S(X)$, and their kernels recover $\mathcal O(X)$. Hence the reflector and the obstruction assignment determine one another, up to the unique canonical isomorphisms arising from their universal properties. Finally, the obstruction assignment determines the obstruction filtration by iteration: $\mathcal O^{0}(X)=X$, $\mathcal O^{n+1}(X)=\mathcal O\!\left(\mathcal O^{n}(X)\right)$. Conversely, the first nontrivial term of the filtration is precisely $\mathcal O(X)$, so the filtration determines the obstruction assignment. Therefore the obstruction assignment, the universal semisimple quotient
maps, the semisimplification reflector, and the obstruction filtration canonically determine one another, up to the unique canonical isomorphisms arising from their universal properties.
\end{proof}

The Persistence Structure Theorem shows that semisimple obstruction
theory is governed by a single underlying mechanism. The obstruction
subobject records the part of an object that remains invisible to
semisimple detection, the universal semisimple quotient records the
corresponding reconstruction, the semisimplification reflector
globalizes this reconstruction functorially, and the obstruction
filtration measures the persistence of obstruction through successive
stages of semisimplification. Consequently, these constructions are not
independent: each canonically determines the others, and any one of them
encodes the full obstruction-theoretic content of the theory.

\section{Tilting and Heart-Dependent Obstruction}
\label{sec:tilting}

This section extends obstruction theory from individual finite-length
abelian categories to triangulated settings admitting multiple
finite-length hearts. The central question is how obstruction changes
when the ambient heart is varied. We focus on
Happel--Reiten--Smal{\o} tilting and show that tilting can create new
obstruction layers, that these layers encode information about the
underlying torsion pair, and that in favourable situations obstruction
data recover the tilting structure itself. This leads naturally to the
notion of a derived obstruction profile, which records obstruction
behaviour across all finite-length hearts of a triangulated category.

\subsection{Heart-dependent obstruction}

The preceding sections developed obstruction theory within a fixed
finite-length abelian category. In a triangulated setting, however,
obstruction depends on the choice of heart. The purpose of this section
is to investigate how obstruction behaves under variation of heart,
with particular emphasis on Happel--Reiten--Smal{\o} tilting. We shall
see that tilting can create new obstruction layers, that these layers
encode information about the underlying torsion pair, and that in
favourable situations the resulting obstruction data determine the
tilt itself.

The first observation is that obstruction is not an invariant of the
ambient triangulated category alone. Rather, it depends on the heart in
which an object is viewed, since semisimple detectability itself is a
heart-dependent notion.

Let \(D\) be a triangulated category admitting finite-length hearts,
and let \(\A\subset D\) be such a heart.

\begin{definition}
Let \(\A\subset D\) be a finite-length heart and let \(X\in\A\). Define
\[
\mathcal O^{\A}(X)
=
\bigcap_{f:X\to S,\;S\in\A_{ss}}
\ker_{\A}(f),
\]
where \(\A_{ss}\) denotes the full subcategory of semisimple objects
of \(\A\), and where the morphisms, kernels, and intersection are taken
in the abelian category \(\A\). We call \(\mathcal O^{\A}(X)\) the
\emph{obstruction subobject of \(X\) with respect to the heart \(\A\)}.
\end{definition}

\begin{remark}
The obstruction construction is heart-dependent. If \(\A\) and \(\B\)
are distinct finite-length hearts in \(D\), then semisimple objects,
kernels, and semisimple detectability depend on the chosen heart. In
particular, if \(X\) lies in both \(\A\) and \(\B\), the obstruction
subobjects $\mathcal O^{\A}(X)$ and $\mathcal O^{\B}(X)$ need not agree.
More generally, the obstruction theory of objects may change under a
change of heart. Consequently, both semisimple reconstruction and
obstruction depth may vary under a change of heart.
\end{remark}

Thus changing heart may alter both semisimple detectability and the
resulting obstruction theory. To understand this dependence, we now
examine the effect of HRS tilting.

\subsection{HRS Tilting}

Among the most important operations on finite-length hearts is Happel--Reiten--Smal{\o} tilting. From the perspective of obstruction theory, the central question is how semisimple detectability and the associated obstruction subobjects change under a variation of heart. Let \((\T,\F)\) be a torsion pair in a finite-length heart \(\A\), and let $\A^\sharp=\langle \T,\F[1]\rangle$
be the associated Happel--Reiten--Smal{\o} tilt.

\begin{theorem}
\label{thm:tilting_creates_obstruction}
Let $0\to F'\to F\to T\to0$ be a nonsplit exact sequence in \(\A\), with $F,F'\in\F$, $T\in\T$. Then in \(\A^\sharp\) there is a short exact sequence
$0\to T\to F'[1]\to F[1]\to0$. If, moreover, \(T\) is simple in \(\A^\sharp\), \(F[1]\) is semisimple in
\(\A^\sharp\), and the induced sequence is nonsplit, then $\mathcal O^{\A^\sharp}(F'[1]) \cong T$.
\end{theorem}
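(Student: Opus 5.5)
Rotate the given sequence to get the short exact sequence in \(\A^\sharp\), then compute \(\mathcal O^{\A^\sharp}(F'[1])\) from the characterization of Theorem~\ref{thm:obstruction_characterization}(3): it is the smallest subobject with semisimple quotient.

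\textbf{Step 1: the short exact sequence.} The sequence \(0\to F'\to F\to T\to 0\) gives a distinguished triangle \(F'\to F\to T\to F'[1]\) in \(D\). Rotating yields
\[
T\to F'[1]\to F[1]\to T[1].
\]
Here \(T\in\T\subseteq\A^\sharp\), and \(F'[1],F[1]\in\F[1]\subseteq\A^\sharp\). A distinguished triangle whose three vertices lie in a heart is a short exact sequence in that heart. Hence \(0\to T\to F'[1]\to F[1]\to 0\) is exact in \(\A^\sharp\). This part is routine.

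\textbf{Step 2: \(\mathcal O^{\A^\sharp}(F'[1])\subseteq T\).} Set \(X=F'[1]\). The quotient \(X/T\cong F[1]\) is semisimple by hypothesis. Theorem~\ref{thm:obstruction_characterization}(3), applied in the finite-length category \(\A^\sharp\), then gives \(\mathcal O^{\A^\sharp}(X)\subseteq T\). Since \(T\) is simple, either \(\mathcal O^{\A^\sharp}(X)=T\) or \(\mathcal O^{\A^\sharp}(X)=0\).

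\textbf{Step 3: ruling out \(\mathcal O^{\A^\sharp}(X)=0\).} By Theorem~\ref{thm:vanishing_criterion}, this case means \(X\) is semisimple in \(\A^\sharp\). This is the main obstacle. In a semisimple object every subobject is a direct summand, so the inclusion \(T\hookrightarrow X\) would split. The induced sequence \(0\to T\to X\to F[1]\to 0\) would then split, contradicting the hypothesis that it is nonsplit.

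To make this rigorous I would give the short argument that every subobject of a semisimple finite-length object is a summand. Write \(X\cong\bigoplus L_i\) with each \(L_i\) simple. Choose a maximal subfamily of the \(L_i\) whose sum meets \(T\) trivially. Simplicity of the remaining \(L_j\) then forces \(T\oplus\bigoplus' L_i=X\).

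\textbf{Conclusion and a remark on hypotheses.} Hence \(\mathcal O^{\A^\sharp}(F'[1])=T\), up to the canonical identification of subobjects, which gives the claimed isomorphism.

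The nonsplitness hypothesis in \(\A\) is not itself what we use. The argument uses only the stated nonsplitness of the induced sequence in \(\A^\sharp\), together with simplicity of \(T\) and semisimplicity of \(F[1]\).
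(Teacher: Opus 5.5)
Your proof is correct. Steps~1 and~2 agree with the paper, which obtains $\mathcal O^{\A^\sharp}(X)\subseteq T$ directly from the definition because the quotient $X\to F[1]$ has semisimple target. Your Step~3, however, reaches the reverse inclusion by a different route.

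The paper argues detector by detector. Take $g\colon X\to S$ with $S$ semisimple, and suppose $g$ did not vanish on $T$. Since $T$ is simple, $g\circ i$ is a monomorphism onto a summand of $S$. A retraction $r\colon S\to T$ then gives $(r\circ g)\circ i=\mathrm{id}_T$, which splits the sequence. So every such $g$ kills $T$, and $T\subseteq\mathcal O^{\A^\sharp}(X)$.

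You instead use the simplicity of $T$ to get the dichotomy $\mathcal O^{\A^\sharp}(X)\in\{0,T\}$. You then exclude $0$ globally through Theorem~\ref{thm:vanishing_criterion}: zero obstruction would make $X$ semisimple, so $T$ would be a summand of $X$ itself. Both arguments rest on the same fact, that a subobject of a semisimple object is a direct summand. The paper applies it to $S$; you apply it to $X$.

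The paper's version is more elementary. It uses only the definition of $\mathcal O$, and it shows directly that every morphism from $X$ to a semisimple object factors through $X\to F[1]$.

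Yours is shorter given the machinery already proved. But through Theorem~\ref{thm:vanishing_criterion} it depends on the finite-length argument of Theorem~\ref{thm:universal_semisimple_quotient}. This is harmless here, for two reasons. First, $\mathcal O^{\A^\sharp}$ is only defined for finite-length hearts. Second, $X$ sits in $0\to T\to X\to F[1]\to 0$ with $T$ simple and $F[1]$ semisimple, so it has finite length anyway.

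Your sketch of the summand lemma also supplies a step the paper uses without comment.
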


\begin{proof}
The short exact sequence in \(\A\) $0\to F'\to F\to T\to0$ determines a distinguished triangle $F'\to F\to T\to F'[1]$.
Rotating yields $T\to F'[1]\to F[1]\to T[1]$. Since $T\in\T$, $F,F'\in\F$, the objects $T$, $F'[1]$, $F[1]$ all lie in the tilted heart \(\A^\sharp\). Hence the rotated triangle becomes a short exact sequence in \(\A^\sharp\): $0\to T\to F'[1]\to F[1]\to0$. Let $X=F'[1]$. Since the quotient map $X\to F[1]$ has semisimple target, the definition of the obstruction subobject gives $\mathcal O^{\A^\sharp}(X)\subseteq\ker(X\to F[1])=T$.

Conversely, let $g:X\to S$ be a morphism in \(\A^\sharp\), where \(S\) is semisimple. We claim that \(g\) annihilates \(T\). Suppose that the composite $h:T\to X\to S$ were nonzero. Since \(T\) is simple, \(h\) is a monomorphism, and since \(S\) is semisimple, its image is a direct summand of \(S\). Hence there exists a morphism $r:S\to T$ such that $r\circ h=\mathrm{id}_T$. Writing $i:T\to X$ for the inclusion, we have $h=g\circ i$, and therefore $(r\circ g)\circ i=\mathrm{id}_T$. Thus \(i\) splits, forcing the short exact sequence $0\to T\to X\to F[1]\to0$ to split, contradicting the nonsplitting assumption. Therefore every morphism from \(X\) into a semisimple object annihilates \(T\), so $T\subseteq\mathcal O^{\A^\sharp}(X)$. Combining the two inclusions gives $\mathcal O^{\A^\sharp}(X)=T$ as subobjects of \(X\), and hence $\mathcal O^{\A^\sharp}(F'[1])\cong T$.
\end{proof}

The theorem shows that obstruction can be created by tilting even when
it was absent in the original heart. Extension data that are invisible
from the perspective of semisimple reconstruction in \(\A\) may become
visible as genuine obstruction layers in the tilted heart
\(\A^\sharp\).

\begin{corollary}
\label{cor:tilting_destroys_semisimplicity}
There exist finite-length hearts \(\A\) and HRS tilts \(\A^\sharp\) such
that an object semisimple in \(\A\) acquires nonzero obstruction in
\(\A^\sharp\).
\end{corollary}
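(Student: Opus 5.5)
The plan is to prove the corollary by one explicit example that satisfies all the hypotheses of Theorem~\ref{thm:tilting_creates_obstruction}. The object that is semisimple in \(\A\) will be \(F'\), and the object acquiring obstruction will be its incarnation \(F'[1]\) in the tilted heart. So I read ``an object semisimple in \(\A\)'' as this object viewed, up to the shift that places it in \(\A^\sharp\), in the new heart. I will state this reading explicitly, because \(F'\) itself does not lie in \(\A^\sharp\).

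\textbf{Setup.} Take \(k\) a field, \(Q\) the quiver \(1\to 2\), \(\A=\operatorname{mod} kQ\) and \(D=D^b(\A)\). The category \(\A\) is finite-length with indecomposables \(S_1\), \(S_2\) and \(P_1\). There is a nonsplit sequence \(0\to S_2\to P_1\to S_1\to 0\), and \(P_1\) is projective-injective. Set \(\T=\operatorname{add}S_1\) and \(\F=\operatorname{add}\{S_2,P_1\}\). This is a torsion pair for two reasons. First, \(\operatorname{Hom}(S_1,S_2)=0\), and \(\operatorname{Hom}(S_1,P_1)=0\) because \(\operatorname{soc}P_1=S_2\). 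Second, every indecomposable lies in \(\T\) or \(\F\), so the torsion sequences are trivial. Apply Theorem~\ref{thm:tilting_creates_obstruction} with \(F'=S_2\), \(F=P_1\) and \(T=S_1\). It gives the short exact sequence \(0\to S_1\to S_2[1]\to P_1[1]\to 0\) in \(\A^\sharp\). Note that \(F'=S_2\) is simple, hence semisimple, in \(\A\).

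\textbf{Checking the extra hypotheses.} Three things must be verified.
\begin{enumerate}
\item \emph{The sequence is nonsplit.} Its class is the connecting morphism \(P_1[1]\to S_1[1]\), which is the shift of the nonzero surjection \(P_1\to S_1\). Hence it is nonzero.
\item \emph{\(P_1[1]\) is simple in \(\A^\sharp\), hence semisimple.}
\item \emph{\(S_1\) is simple in \(\A^\sharp\).}
\end{enumerate}
For (2) and (3) I would use the induced torsion pair \((\F[1],\T)\) in \(\A^\sharp\). Let \(U\subseteq S_1\) be a subobject in \(\A^\sharp\). Its \(\F[1]\)-part maps to \(S_1\), and \(\operatorname{Hom}_D(\F[1],S_1)=\operatorname{Ext}^{-1}_{\A}(\F,S_1)=0\), so \(U\in\T\). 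Then \(U\) is a subobject of \(S_1\) computed inside \(\A\), which forces \(U=0\) or \(U=S_1\). Dually, any quotient of \(P_1[1]\) lies in \(\F[1]\) and corresponds to a subobject of \(P_1\) in \(\F\) with quotient in \(\F\). The only candidate is \(S_2\subset P_1\), and its quotient \(S_1\) is not in \(\F\). So \(P_1[1]\) is simple.

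A cleaner alternative is to note that \(\A^\sharp\) has exactly the three indecomposables \(S_1\), \(S_2[1]\) and \(P_1[1]\). Its Grothendieck group has rank \(2\), and \(S_2[1]\) is a nontrivial extension of \(P_1[1]\) by \(S_1\). Therefore the simples are exactly \(S_1\) and \(P_1[1]\).

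\textbf{Conclusion.} Theorem~\ref{thm:tilting_creates_obstruction} then gives \(\mathcal O^{\A^\sharp}(S_2[1])\cong S_1\neq 0\), while \(\mathcal O^{\A}(S_2)=0\) by Theorem~\ref{thm:vanishing_criterion}.

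\textbf{Main obstacle.} The step I expect to be most delicate is the rigorous verification of simplicity in \(\A^\sharp\) (points (2) and (3)). It requires correctly identifying subobjects and quotients in the tilted heart via the torsion pair \((\F[1],\T)\) and the long exact cohomology sequence with respect to \(\A\). I would handle it with the cohomological argument above, checking that \(H^0_\A\) and \(H^{-1}_\A\) of any subobject land in \(\T\) and \(\F\) respectively.
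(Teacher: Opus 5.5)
Your overall strategy is the paper's. You use the same $A_2$ example, the same torsion pair, the same rotated sequence $0\to S_1\to S_2[1]\to P_1[1]\to 0$, and the same reading of the corollary ($S_2$ semisimple in $\A$, $S_2[1]$ obstructed in $\A^\sharp$). Your nonsplitness argument via the connecting morphism $\pm\pi[1]\neq 0$ is equivalent to the paper's rotation argument. Your check that $S_1$ is simple in $\A^\sharp$ is also fine.

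\textbf{The gap.} It lies in your primary, cohomological verification that $P_1[1]$ is simple. An epimorphism $P_1[1]\to G[1]$ in $\A^\sharp$ with kernel $K$ gives an exact sequence $0\to H^{-1}(K)\to P_1\to G\to H^0(K)\to 0$ in $\A$. Your argument only controls $H^{-1}(K)$. It never rules out the case $H^{-1}(K)=0$ with $H^0(K)\neq 0$, that is, a nonzero subobject of $P_1[1]$ lying in $\T$. Such a subobject would come from a monomorphism $P_1\hookrightarrow G$ with cokernel in $\T$.

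Your ``correspondence'' between quotients of $P_1[1]$ and subobjects of $P_1$ is false in general. Applied verbatim to $S_2[1]$, it would prove $S_2[1]$ simple. That contradicts the very sequence you are using: its quotient map $S_2[1]\to P_1[1]$ arises from exactly such a monomorphism $S_2\hookrightarrow P_1$ with cokernel $S_1\in\T$.

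The word ``dually'' fails for a structural reason. For $S_1$, the reduction to $\A$ works because $\T$ is closed under subobjects. The dual reduction would need $\F$ to be closed under quotients, and it is not, since $P_1\twoheadrightarrow S_1$.

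\textbf{The fix.} The missing input is $\operatorname{Ext}^1_{\A}(S_1,P_1)=0$, which holds because $P_1$ is injective; you noted this but never used it. It gives $\operatorname{Hom}_D(\T,P_1[1])=0$. Any subobject of $P_1[1]$ lying in $\T$ then maps to zero under a monomorphism, so it vanishes. This is precisely the second half of the paper's argument.

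\textbf{Your alternative route.} Your ``cleaner alternative'' does close the gap, and it is a genuinely global way of identifying the simples. However, it rests on facts you only assert:
\begin{enumerate}
\item Every $X\in\A^\sharp$ splits as $H^{-1}(X)[1]\oplus H^0(X)$, because the connecting map lies in $\operatorname{Ext}^2_{\A}=0$. Hence $S_1$, $S_2[1]$, $P_1[1]$ are the only indecomposables.
\item $K_0(\A^\sharp)\cong K_0(D^b(\A))\cong\mathbb Z^2$, because the tilted t-structure is bounded.
\item $\A^\sharp$ has finite length, so $K_0(\A^\sharp)$ is free on the simples.
\end{enumerate}
With these supplied, the argument goes through. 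There are exactly two simples among three indecomposables, and $S_2[1]$ has the proper nonzero subobject $S_1$. This forces the simples to be $S_1$ and $P_1[1]$.
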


\begin{proof}
Let $Q:1\longrightarrow2$ be the \(A_2\)-quiver, and let $\A=\operatorname{Rep}_k(Q)$. Write $S_1=(k\to0)$, $S_2=(0\to k)$, and $P_1=(k\xrightarrow{\mathrm{id}}k)$. There is a nonsplit exact sequence in \(\A\) $0\to S_2\to P_1\to S_1\to0$. Let $\T=\langle S_1\rangle$ and $\F=\{M\in\A:\operatorname{Hom}_{\A}(S_1,M)=0\}$. Then $S_2,P_1\in\F$ and $S_1\in\T$. Let
$\A^\sharp=\langle\T,\F[1]\rangle$ be the corresponding HRS tilt. The given short exact sequence determines a distinguished triangle $S_2\to P_1\to S_1\to S_2[1]$, whose rotation gives
$S_1\to S_2[1]\to P_1[1]\to S_1[1]$. Hence there is a short exact sequence in \(\A^\sharp\) $0\to S_1\to S_2[1]\to P_1[1]\to0$. This sequence is nonsplit. Indeed, if it split in \(\A^\sharp\), then
the associated distinguished triangle would be split. Since splitting
of distinguished triangles is preserved under rotation, the original
triangle $S_2\to P_1\to S_1\to S_2[1]$ would also be split, contradicting
the fact that $0\to S_2\to P_1\to S_1\to0$ is nonsplit in \(\A\).

Moreover, \(S_1\) is simple in \(\A^\sharp\). Indeed, \(\T=\langle S_1\rangle\), so \(S_1\) is simple in the torsion part, and no nonzero object of \(\F[1]\) can occur as a subobject of \(S_1\) in the tilted heart. Also \(P_1[1]\) is simple in \(\A^\sharp\). The indecomposable objects of \(\F\) are \(S_2\) and \(P_1\). Although \(S_2\) is a proper subobject of \(P_1\) in \(\A\), its quotient is \(S_1\notin\F\), so it does not define a proper subobject of \(P_1\) in the exact structure induced on \(\F\). Thus \(P_1\) is simple in \(\F\). Moreover, $\operatorname{Ext}^1_{\A}(S_1,P_1)=0$, so no nonzero object of \(\T=\langle S_1\rangle\) can occur as a subobject of \(P_1[1]\) in \(\A^\sharp\). Hence \(P_1[1]\) is simple in the tilted heart, and therefore semisimple. Thus all the hypotheses of Theorem~\ref{thm:tilting_creates_obstruction}
are satisfied, and therefore
$\mathcal O^{\A^\sharp}(S_2[1])\cong S_1$. Since \(S_1\neq0\), the object \(S_2[1]\) has nonzero obstruction in
\(\A^\sharp\), whereas \(S_2\) is simple, and hence semisimple, in the
original heart \(\A\). This proves the claim.
\end{proof}

The example demonstrates that obstruction created by tilting is not an
isolated phenomenon. Rather, variation of heart can reveal previously
hidden layers of obstruction. This suggests recording systematically
which torsion objects appear as obstruction layers after tilting.

\subsection{Obstruction Shadows}

The obstruction layers produced by HRS tilting naturally organize into a torsion-theoretic invariant. The following construction records the portion of the torsion class that becomes visible through obstruction in the tilted heart.

\begin{definition}
\label{def:torsion_obstruction_shadow}
Let \((\T,\F)\) be a torsion pair in \(\A\), and suppose that \(\T\) is
hereditary. Let \(\A^\sharp=\langle\T,\F[1]\rangle\) be the associated
HRS tilt. The \emph{torsion obstruction shadow} is the Serre subcategory
\(\Shad_{\T}(\A^\sharp/\A)\subseteq\T\) generated by the objects
\(H^0_{\A}\bigl(\mathcal O^{\A^\sharp}(F[1])\bigr)\), \(F\in\F\).
\end{definition}

\begin{definition}
\label{def:visible_torsion}
A simple object \(T\in\T\) is called \emph{visible} if \(T\) occurs as a composition factor of \(H^0_{\A}\bigl(\mathcal O^{\A^\sharp}(F[1])\bigr)\) for some \(F\in\F\). Let \(\T_{\mathrm{vis}}\subseteq\T\) be the Serre subcategory generated by the visible simple objects of \(\T\).
\end{definition}

Theorem~\ref{thm:tilting_creates_obstruction} gives a concrete mechanism for visibility. In particular, if a simple object \(T\in\T\) occurs in a nonsplit exact sequence \(0\to F'\to F\to T\to0\) with \(F,F'\in\F\), such that the induced sequence \(0\to T\to F'[1]\to F[1]\to0\) in \(\A^\sharp\) is nonsplit and \(F[1]\) is semisimple, then \(\mathcal O^{\A^\sharp}(F'[1])\cong T\), and hence \(T\) is visible.

\begin{theorem}
\label{thm:obstruction_shadow_characterization}
With notation as above, \(\Shad_{\T}(\A^\sharp/\A)=\T_{\mathrm{vis}}\).
\end{theorem}

\begin{proof}
By definition, \(\Shad_{\T}(\A^\sharp/\A)\) is the Serre subcategory generated by the objects \(H^0_{\A}\bigl(\mathcal O^{\A^\sharp}(F[1])\bigr)\), with \(F\in\F\). Since \(\A\) has finite length, each such object has finite length and therefore belongs to the Serre subcategory generated by its simple composition factors. By Definition~\ref{def:visible_torsion}, every such composition factor is a visible simple object. Hence every generator of \(\Shad_{\T}(\A^\sharp/\A)\) belongs to \(\T_{\mathrm{vis}}\), and therefore \(\Shad_{\T}(\A^\sharp/\A)\subseteq\T_{\mathrm{vis}}\). Conversely, every visible simple object \(T\) is, by definition, a composition factor of \(H^0_{\A}\bigl(\mathcal O^{\A^\sharp}(F[1])\bigr)\) for some \(F\in\F\). Since \(\Shad_{\T}(\A^\sharp/\A)\) is a Serre subcategory containing this object, it contains all of its composition factors, and hence contains \(T\). Since \(\T_{\mathrm{vis}}\) is the Serre subcategory generated by all visible simple objects, it follows that \(\T_{\mathrm{vis}}\subseteq\Shad_{\T}(\A^\sharp/\A)\). Thus \(\Shad_{\T}(\A^\sharp/\A)=\T_{\mathrm{vis}}\).
\end{proof}

\section{Obstruction Theory Beyond Radical Theory}
\label{sec:classical}

The obstruction theory developed in the preceding sections was
constructed entirely from semisimple detectability and universal
reconstruction. In a fixed finite-length abelian category, the resulting
obstruction subobject turns out to recover the classical radical.
However, the obstruction formalism contains substantially more
structure.

Indeed, obstruction theory naturally produces a universal semisimple
quotient, a semisimplification reflector, a functorial obstruction
filtration, and a heart-dependent theory compatible with HRS tilting.
These constructions continue to exist in settings where no classical
radical theory is available. The purpose of this section is therefore
not to identify obstruction theory with radical theory, but rather to
show that classical radical theory is recovered as a special case of a
broader obstruction-theoretic framework.

We now verify this recovery in the classical module setting. Let \(R\)
be an artinian ring and let $\C=\operatorname{mod}\text{-}R$.

\begin{theorem}
\label{thm:recovery_classical_radical}
For every finite-length \(R\)-module \(M\), \(\mathcal O(M)=\operatorname{rad}(M)\). Consequently, the obstruction filtration agrees with the classical radical filtration.
\end{theorem}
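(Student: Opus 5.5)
The plan is to identify both sides as the same intersection of kernels. Recall that $\operatorname{rad}(M)$ is the intersection of all maximal submodules of $M$, or equivalently the intersection of the kernels of all morphisms $M\to L$ with $L$ simple. (For a finite-length module, $M\neq0$ has maximal submodules and $\operatorname{rad}(M)$ is a proper submodule.) The semisimple objects of $\C=\operatorname{mod}\text{-}R$ are exactly the semisimple modules, i.e.\ finite direct sums of simple modules. So $\mathcal C_{ss}$ is the usual class of semisimple modules, and both $\mathcal O(M)$ and $\operatorname{rad}(M)$ are computed inside the same lattice of submodules.

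First I would show $\mathcal O(M)\subseteq\operatorname{rad}(M)$. Every simple module $L$ is semisimple, so every morphism $M\to L$ occurs in the intersection defining $\mathcal O(M)$. In particular, for each maximal submodule $N\subseteq M$ the quotient map $M\to M/N$ is such a morphism, so $\mathcal O(M)\subseteq N$. Intersecting over all maximal $N$ gives the inclusion.

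For the reverse inclusion, let $f:M\to S$ with $S\cong L_1\oplus\cdots\oplus L_n$ semisimple, the $L_i$ simple, and let $\pi_i$ be the projections. Then $\ker f=\bigcap_i\ker(\pi_i\circ f)$. Each $\pi_i\circ f$ is either zero, with kernel $M$, or surjective onto the simple module $L_i$, with maximal kernel. In both cases $\ker(\pi_i\circ f)\supseteq\operatorname{rad}(M)$, hence $\operatorname{rad}(M)\subseteq\ker f$. Taking the intersection over all $f$ gives $\operatorname{rad}(M)\subseteq\mathcal O(M)$. Alternatively, one can use Theorem~\ref{thm:obstruction_characterization}(3): $\mathcal O(M)$ is the smallest submodule with semisimple quotient. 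Since $M/\operatorname{rad}(M)$ is semisimple for finite-length $M$, we get $\mathcal O(M)\subseteq\operatorname{rad}(M)$; conversely $M/\mathcal O(M)$ is semisimple, and a semisimple module has zero radical, so $\operatorname{rad}(M)\subseteq\mathcal O(M)$. I would present the direct kernel argument and mention this second route as a cross-check.

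For the filtration statement, I would argue by induction on $n$. The classical radical filtration is $\operatorname{rad}^0(M)=M$ and $\operatorname{rad}^{n+1}(M)=\operatorname{rad}(\operatorname{rad}^n(M))$. Each $\operatorname{rad}^n(M)$ is a submodule of a finite-length module, hence of finite length. Applying the first part to $\operatorname{rad}^n(M)=\mathcal O^n(M)$ gives $\mathcal O^{n+1}(M)=\operatorname{rad}^{n+1}(M)$. The only point requiring care, and the main potential obstacle, is ensuring that the kernels and intersections in Definition~\ref{def:obstruction_subobject}, taken in the abelian category $\C$, coincide with set-theoretic submodule intersections. This holds because $\operatorname{mod}\text{-}R$ is a concrete abelian category in which subobjects are submodules. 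By finite length, the intersection over the possibly large class of morphisms also reduces to a finite one, as in the proof of Theorem~\ref{thm:universal_semisimple_quotient}.
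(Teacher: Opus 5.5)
Your proof is correct, but your main argument takes a different route from the paper's.

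\textbf{The paper's route.} The paper argues abstractly. It takes two standard facts for granted: $M/\operatorname{rad}(M)$ is semisimple for finite-length $M$, and every morphism from $M$ to a semisimple module annihilates $\operatorname{rad}(M)$. From these it concludes that $q_{\operatorname{rad}}\colon M\to M/\operatorname{rad}(M)$ has the same universal property as $q_{\mathcal O}$ in Theorem~\ref{thm:universal_semisimple_quotient}. Uniqueness of universal arrows then gives an isomorphism $\varphi$ with $q_{\operatorname{rad}}=\varphi\circ q_{\mathcal O}$, so the two kernels coincide.

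\textbf{Your route.} You compare the two intersections of kernels directly. The quotient maps onto $M/N$ for maximal $N$ are among the test morphisms for $\mathcal O(M)$. For the reverse inclusion, you actually prove the second fact the paper only asserts, by splitting $f\colon M\to S$ into its components $\pi_i\circ f$ into simple summands.

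\textbf{What each buys.} Your argument is more elementary and self-contained. It uses neither Theorem~\ref{thm:universal_semisimple_quotient} nor the semisimplicity of $M/\operatorname{rad}(M)$. The same kernel comparison would in fact go through for arbitrary modules, allowing arbitrary direct sums of simples as targets, where $M/\operatorname{rad}(M)$ need not be semisimple (e.g.\ $\mathbb Z$ has zero radical). The paper's argument instead illustrates its ``recovery'' viewpoint: any quotient with the universal property of Theorem~\ref{thm:universal_semisimple_quotient} must coincide with $X/\mathcal O(X)$, so the radical is identified purely by universality.

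Your cross-check via Theorem~\ref{thm:obstruction_characterization}(3) is essentially the paper's argument, phrased through minimality rather than universality. Your induction for the filtration matches the paper's ``applying the same identification successively,'' and you make explicit that each $\operatorname{rad}^n(M)$ has finite length, which the paper leaves implicit.
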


\begin{proof}
By Theorem~\ref{thm:universal_semisimple_quotient}, the quotient \(M/\mathcal O(M)\) is semisimple and the quotient map \(q_{\mathcal O}\colon M\to M/\mathcal O(M)\) is universal among morphisms from \(M\) to semisimple modules. In a finite-length module category, \(M/\operatorname{rad}(M)\) is semisimple, and every morphism from \(M\) to a semisimple module annihilates \(\operatorname{rad}(M)\). Hence the quotient map \(q_{\operatorname{rad}}\colon M\to M/\operatorname{rad}(M)\) has the same universal property. Therefore there is an isomorphism \(\varphi\colon M/\mathcal O(M)\xrightarrow{\sim}M/\operatorname{rad}(M)\) such that \(q_{\operatorname{rad}}=\varphi\circ q_{\mathcal O}\). Since \(\varphi\) is an isomorphism, the two quotient maps have the same kernel, and hence \(\mathcal O(M)=\operatorname{rad}(M)\). Applying the same identification successively to \(\mathcal O^n(M)\) gives \(\mathcal O^{n}(M)=\operatorname{rad}^{n}(M)\) for every \(n\ge0\). Hence the obstruction filtration \(M\supseteq\mathcal O(M)\supseteq\mathcal O^{2}(M)\supseteq\cdots\) coincides term-by-term with the classical radical filtration \(M\supseteq\operatorname{rad}(M)\supseteq\operatorname{rad}^{2}(M)\supseteq\cdots\).
\end{proof}

\begin{corollary}
\label{cor:recovery_jacobson}
For the regular module \(R_R\), $\mathcal O(R_R)=J(R)$.
\end{corollary}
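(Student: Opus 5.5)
The plan is to specialize Theorem~\ref{thm:recovery_classical_radical} to $M=R_R$ and then identify the radical of the regular module with the Jacobson radical of $R$.

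First, since $R$ is artinian, $R_R$ has finite length: a right artinian ring is right noetherian by the Hopkins--Levitzki theorem, so $R_R$ satisfies both chain conditions. It is therefore an object of the finite-length category in which Theorem~\ref{thm:recovery_classical_radical} applies. That theorem immediately gives $\mathcal O(R_R)=\operatorname{rad}(R_R)$.

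Second, I will recall why $\operatorname{rad}(R_R)=J(R)$. By definition, $\operatorname{rad}(R_R)$ is the intersection of the maximal submodules of $R_R$, and these are exactly the maximal right ideals of $R$. The intersection of all maximal right ideals is one of the standard characterizations of $J(R)$. Combining the two steps yields $\mathcal O(R_R)=J(R)$.

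For extra transparency, I could also argue directly from Theorem~\ref{thm:obstruction_characterization}(3). A right ideal $I$ has $R/I$ semisimple if and only if $I$ is a finite intersection of maximal right ideals. Such finite intersections suffice by the finite-length argument in the proof of Theorem~\ref{thm:universal_semisimple_quotient}. So the smallest such $I$ is the intersection of all maximal right ideals, which is $J(R)$.

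No step is a real obstacle. The only point needing care is the finite-length hypothesis on $R_R$, which the Hopkins--Levitzki theorem supplies, together with taking the right-ideal characterization of $J(R)$ consistently with the right-module convention of $\operatorname{mod}\text{-}R$.
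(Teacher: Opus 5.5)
Your proposal is correct and follows the same route as the paper: apply Theorem~\ref{thm:recovery_classical_radical} to $M=R_R$ to get $\mathcal O(R_R)=\operatorname{rad}(R_R)$, then use the standard identification of $\operatorname{rad}(R_R)$ with $J(R)$ as the intersection of the maximal right ideals. Your Hopkins--Levitzki remark makes explicit the finite-length hypothesis on $R_R$, which the paper uses without comment. Your alternative argument via Theorem~\ref{thm:obstruction_characterization}(3) is also sound, but it is not needed.
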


\begin{proof}
Applying Theorem~\ref{thm:recovery_classical_radical} to the regular right \(R\)-module \(R_R\) gives $\mathcal O(R_R) = \operatorname{rad}(R_R)$. For the regular module, the module radical coincides with the Jacobson radical: $\operatorname{rad}(R_R) = J(R)$. Therefore $\mathcal O(R_R) = J(R)$.
\end{proof}

\begin{theorem}
\label{thm:loewy_recovery}
For every finite-length \(R\)-module \(X\), the obstruction filtration agrees
term-by-term with the usual radical filtration. Consequently,
\(\operatorname{depth}_{\mathcal O}(X)=\LoewyLength(X)\).
\end{theorem}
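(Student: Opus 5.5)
The plan is to derive both assertions from Theorem~\ref{thm:recovery_classical_radical} together with the standard definition of Loewy length. The natural definition is the length of the radical series, i.e. the least \(n\geq 0\) with \(\operatorname{rad}^n(X)=0\). Here \(\operatorname{rad}^{n+1}(X)=\operatorname{rad}(\operatorname{rad}^n(X))\) is the iterated module radical. Over an artinian ring this agrees with \(XJ(R)^{n}\), and by the artinian hypothesis it also agrees with the socle-series length.

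First, I would establish the term-by-term agreement \(\mathcal O^n(X)=\operatorname{rad}^n(X)\) by induction on \(n\). The case \(n=0\) is \(X=X\). For the inductive step, assume \(\mathcal O^n(X)=\operatorname{rad}^n(X)\). This submodule is again a finite-length \(R\)-module, since submodules of finite-length modules have finite length. Applying Theorem~\ref{thm:recovery_classical_radical} to it gives
\[
\mathcal O^{n+1}(X)=\mathcal O(\mathcal O^n(X))=\operatorname{rad}(\operatorname{rad}^n(X))=\operatorname{rad}^{n+1}(X).
\]
One point needs care. The obstruction \(\mathcal O(N)\) of a submodule \(N\) is computed intrinsically, using morphisms \(N\to S\) in \(\operatorname{mod}\text{-}R\), and not using morphisms out of \(X\). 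This intrinsic computation is exactly what Theorem~\ref{thm:recovery_classical_radical} identifies with \(\operatorname{rad}(N)\). So no compatibility with the ambient module is required.

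Second, I would compare the stopping indices. By Theorem~\ref{thm:obstruction_nilpotence}, \(\operatorname{depth}_{\mathcal O}(X)\) exists. Since the two filtrations coincide termwise, the sets \(\{d:\mathcal O^d(X)=0\}\) and \(\{d:\operatorname{rad}^d(X)=0\}\) are equal, so their minima agree. This gives \(\operatorname{depth}_{\mathcal O}(X)=\LoewyLength(X)\).

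The only real obstacle is the convention issue, namely making sure \(\LoewyLength\) is defined via the iterated radical \(\operatorname{rad}(\operatorname{rad}^{n}(X))\) rather than via \(XJ^n\). If the paper uses the \(XJ^n\) convention, I would add a lemma: for \(R\) artinian and \(N\) finitely generated, \(\operatorname{rad}(N)=NJ(R)\). This holds because \(N/NJ\) is a module over the semisimple ring \(R/J\), so it is semisimple, and every map from \(N\) to a simple module kills \(NJ\). Inductively, \(\operatorname{rad}^n(X)=XJ^n\), and the two conventions agree.
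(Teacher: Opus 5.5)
Your proposal is correct and follows the paper's proof: you obtain \(\mathcal O^n(X)=\operatorname{rad}^n(X)\) by inductively applying Theorem~\ref{thm:recovery_classical_radical} to the finite-length submodule \(\mathcal O^n(X)\), and you get \(\operatorname{depth}_{\mathcal O}(X)=\LoewyLength(X)\) because the two filtrations first vanish at the same index. Your extra remarks, that \(\mathcal O(N)\) is computed intrinsically and that \(\operatorname{rad}^n(X)=XJ(R)^n\), only make explicit points the paper leaves implicit.
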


\begin{proof}
By Theorem~\ref{thm:recovery_classical_radical},
\(\mathcal O(Y)=\operatorname{rad}(Y)\) for every finite-length
\(R\)-module \(Y\). Applying this identification inductively gives
\(\mathcal O^{n}(X)=\operatorname{rad}^{n}(X)\) for every \(n\ge 0\).
Hence the obstruction filtration
\(X\supseteq\mathcal O(X)\supseteq\mathcal O^{2}(X)\supseteq\cdots\)
coincides term-by-term with the classical radical filtration
\(X\supseteq\operatorname{rad}(X)\supseteq\operatorname{rad}^{2}(X)
\supseteq\cdots\). Therefore the least integer \(n\) for which
\(\mathcal O^{n}(X)=0\) is the least integer \(n\) for which
\(\operatorname{rad}^{n}(X)=0\). By definition, these are respectively
\(\operatorname{depth}_{\mathcal O}(X)\) and \(\LoewyLength(X)\).
Thus
\(\operatorname{depth}_{\mathcal O}(X)=\LoewyLength(X)\).
\end{proof}

\begin{remark}
The Loewy Recovery Theorem should be viewed as a compatibility result
rather than the primary motivation for the construction. The invariant
\(\mathcal O(X)\) is defined through universal semisimple detectability
and reconstruction, without reference to radicals or Loewy theory.
The theorem shows that the classical radical filtration and Loewy length
emerge naturally from the obstruction-theoretic framework. In this
sense, Loewy theory is recovered as a consequence of semisimple
obstruction rather than taken as a starting point.
\end{remark}

\begin{theorem}
\label{thm:obstruction_beyond_radical}
The obstruction formalism extends the recovery of the classical radical
in finite-length module categories. More precisely:
\begin{enumerate}
\item in every finite-length module category, \(\mathcal O\) recovers the
classical radical filtration;

\item the assignment \(X\mapsto X/\mathcal O(X)\) determines a universal
semisimple reconstruction functor and a reflector onto the semisimple
subcategory;

\item under variation of heart in a triangulated category, the
heart-relative obstruction \(\mathcal O^{\A}\) can change, and under HRS
tilting its obstruction layers can recover torsion-theoretic data.
\end{enumerate}
Consequently, radical theory is recovered as the fixed-heart
module-theoretic instance of a broader obstruction theory.
\end{theorem}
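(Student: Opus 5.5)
The statement is a summary theorem, so the plan is to verify each of its three items by citing the appropriate earlier results and then read off the concluding sentence. Throughout I fix the setting of Section~\ref{sec:classical}: \(R\) is artinian and \(\C=\operatorname{mod}\text{-}R\), or more generally a finite-length heart \(\A\) of a triangulated category \(D\).

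For item (1), I would invoke Theorem~\ref{thm:recovery_classical_radical}, which gives \(\mathcal O(M)=\operatorname{rad}(M)\) for every finite-length module \(M\). The only point needing care is the inductive step \(\mathcal O^{n}(M)=\operatorname{rad}^{n}(M)\). Here \(\operatorname{rad}^{n}\) must be read as the iterated radical \(\operatorname{rad}(\operatorname{rad}^{n-1}(M))\), not as \(J(R)^nM\). For artinian \(R\) the two readings agree, since \(\operatorname{rad}(N)=NJ(R)\). With that convention the step is immediate: each \(\operatorname{rad}^{n-1}(M)\) is again a finite-length module, so Theorem~\ref{thm:recovery_classical_radical} applies to it. Theorem~\ref{thm:loewy_recovery} and Corollary~\ref{cor:recovery_jacobson} then add the Loewy-length and Jacobson-radical refinements.

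For item (2), I would combine Theorem~\ref{thm:universal_semisimple_quotient}, which gives the universal property of \(X\to X/\mathcal O(X)\), with Theorem~\ref{thm:functoriality} for functoriality and Theorem~\ref{thm:semisimplification_reflector} for the adjunction \(\mathsf S\dashv\iota\). Theorem~\ref{thm:uniqueness_semisimplification} shows the reconstruction functor is unique up to natural isomorphism. These results hold in every finite-length abelian category, hence in particular in module categories.

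Item (3) is the only one with genuine content, and I expect it to be the main obstacle, because "can change" and "can recover" are existence statements that need a witness. For the change of obstruction, I would use Corollary~\ref{cor:tilting_destroys_semisimplicity}. Take \(\A=\operatorname{Rep}_k(1\to 2)\) and the tilt \(\A^\sharp\) constructed there. Then \(S_2\) is simple in \(\A\), so \(\mathcal O^{\A}(S_2)=0\), while \(\mathcal O^{\A^\sharp}(S_2[1])\cong S_1\neq 0\). Strictly speaking these are different objects of \(D\), so the honest reading is that the obstruction of the same underlying extension data changes. For recovery of torsion data, I would use Theorem~\ref{thm:tilting_creates_obstruction}. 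It identifies the obstruction layer \(\mathcal O^{\A^\sharp}(F'[1])\) with the torsion object \(T\), so \(T\) lies in \(\T_{\mathrm{vis}}\), and Theorem~\ref{thm:obstruction_shadow_characterization} identifies this visible part with the shadow \(\Shad_{\T}(\A^\sharp/\A)\). In the \(A_2\) example, \(\T=\langle S_1\rangle\) and \(S_1\) is visible, so the shadow is all of \(\T\). The obstruction data therefore recover the torsion class completely in this instance.

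Finally, the concluding sentence follows formally. Item (1) shows that on a fixed module heart the obstruction is the radical. Items (2) and (3) exhibit structure, namely the reflector and the heart dependence, that is defined without reference to radicals. So radical theory is the specialization of the obstruction theory to a fixed module heart.
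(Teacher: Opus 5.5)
Your proposal is correct and follows the paper's proof. As in the paper, item (1) is discharged by Theorems~\ref{thm:recovery_classical_radical} and~\ref{thm:loewy_recovery}, item (2) by Theorems~\ref{thm:universal_semisimple_quotient}, \ref{thm:functoriality} and~\ref{thm:semisimplification_reflector}, item (3) by Theorems~\ref{thm:tilting_creates_obstruction} and~\ref{thm:obstruction_shadow_characterization}, and the concluding sentence is read off from these. Your additions are accurate refinements rather than a different route: the explicit $A_2$ witness from Corollary~\ref{cor:tilting_destroys_semisimplicity}, the point that $\operatorname{rad}^n$ must be the iterated radical (which equals $MJ(R)^n$ for artinian $R$), and the caveat that $S_2$ and $S_2[1]$ are different objects of $D$.
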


\begin{proof}
Statement \textup{(1)} follows from
Theorems~\ref{thm:recovery_classical_radical}
and~\ref{thm:loewy_recovery}. The former shows that
$\mathcal O(M)=\operatorname{rad}(M)$ for every finite-length
\(R\)-module \(M\), while the latter shows that the obstruction
filtration agrees term-by-term with the classical radical filtration.
Statement \textup{(2)} follows from
Theorems~\ref{thm:universal_semisimple_quotient},
\ref{thm:functoriality}, and~\ref{thm:semisimplification_reflector}.
These show that \(X/\mathcal O(X)\) is semisimple, that every morphism
from \(X\) into a semisimple object factors uniquely through the
canonical quotient \(X\to X/\mathcal O(X)\), and that these universal
quotients are functorial in \(X\). Hence they determine a reflector onto
the semisimple subcategory. Statement \textup{(3)} follows from the
heart-dependent construction and the HRS tilting results. In particular,
Theorem~\ref{thm:tilting_creates_obstruction} shows that tilting can
create new obstruction layers, while
Theorem~\ref{thm:obstruction_shadow_characterization} shows that the
resulting obstruction shadow recovers the visible torsion subcategory.
Thus the equality \(\mathcal O(M)=\operatorname{rad}(M)\) in module
categories is a recovery theorem, not a reduction of the obstruction
formalism to radical theory. The radical is recovered inside a fixed
module category, while the obstruction theory also provides universal
reconstruction, reflection, persistence, and heart-dependent tilting
data.
\end{proof}

\begin{remark}
The preceding results should be interpreted as recovery theorems rather
than identification theorems.

Within a fixed finite-length module category, the obstruction subobject
coincides with the classical radical, the obstruction filtration
coincides with the radical filtration, and obstruction depth coincides
with Loewy length. However, the obstruction-theoretic framework contains
additional structure not present in classical radical theory. In
particular, it provides:
\begin{enumerate}
\item a universal semisimple reconstruction functor;
\item a reflector onto the semisimple subcategory;
\item a persistence theory for obstruction depth;
\item a heart-dependent obstruction theory in triangulated categories;
\item obstruction shadows that recover tilting data.
\end{enumerate}

Thus the radical appears as one manifestation of a more general
obstruction-theoretic phenomenon. The novelty of the theory lies not in
producing a different subobject in module categories, but in providing a
categorical reconstruction framework that extends beyond the classical
scope of radical theory.
\end{remark}

\section{Examples}
\label{sec:examples}

The following examples illustrate the basic regimes of obstruction
theory: vanishing obstruction for semisimple objects, obstruction
arising from nonsplit extensions, heart-dependent obstruction created
by HRS tilting, and the recovery of torsion data through obstruction
shadows.

\subsection{A nonsplit extension}

\begin{example}
Let $0\to S\to E\to T\to0$ be a nonsplit extension of simple objects,
and assume that every morphism from \(E\) to a semisimple object factors
through the quotient \(E\to T\). Then $\mathcal O(E)=S$ as subobjects of
\(E\). Indeed, since \(T\) is simple, it is semisimple, and hence the
quotient map \(E\to T\) occurs among the morphisms defining
\(\mathcal O(E)\). Therefore
\(\mathcal O(E)\subseteq\ker(E\to T)=S\). Conversely, every morphism from \(E\) to a semisimple object factors
through \(E\to T\) by assumption, and therefore annihilates \(S\).
Hence \(S\) is contained in the kernel of every morphism from \(E\) to
a semisimple object, so \(S\subseteq\mathcal O(E)\). Thus
\(\mathcal O(E)=S\). Since \(S\) is simple, it is semisimple, and therefore
\(\mathcal O(S)=0\). Hence \(\mathcal O^2(E)=0\). On the other hand,
\(\mathcal O(E)=S\neq0\), so
\(\operatorname{depth}_{\mathcal O}(E)=2\). Thus the obstruction is
exactly the subobject defining the nonsplit extension.
\end{example}

\subsection{The \(A_2\)-quiver and HRS tilting}

\begin{example}
Let $Q:1\longrightarrow2$ and let
$\A=\operatorname{Rep}_k(Q)$. Write
$S_1=(k\to0)$, $S_2=(0\to k)$, and
$P_1=(k\xrightarrow{\mathrm{id}}k)$. There is a nonsplit exact
sequence $0\to S_2\to P_1\to S_1\to0$. Let
$\T=\langle S_1\rangle$ and
$\F=\{M\in\A:\operatorname{Hom}_{\A}(S_1,M)=0\}$.
Then $S_1\in\T$ and $S_2,P_1\in\F$. Let $\A^\sharp=\langle\T,\F[1]\rangle$ be the associated HRS tilt.
Rotating the triangle associated to the nonsplit sequence yields a
short exact sequence in $\A^\sharp$,
$0\to S_1\to S_2[1]\to P_1[1]\to0$.
In the tilted heart, $S_1$ and $P_1[1]$ are simple.
By Theorem~\ref{thm:tilting_creates_obstruction}, the subobject
$S_1\subseteq S_2[1]$ is contained in
$\mathcal O^{\A^\sharp}(S_2[1])$. Conversely, since $P_1[1]$ is
simple and hence semisimple, the quotient map
$S_2[1]\to P_1[1]$ gives
$\mathcal O^{\A^\sharp}(S_2[1])\subseteq S_1$.
Therefore
$\mathcal O^{\A^\sharp}(S_2[1])\cong S_1$. Since $S_1$ is simple,
$\mathcal O^{\A^\sharp}(S_1)=0$. Thus
$\mathcal O^{\A^\sharp}(S_2[1])\neq0$ while
$\mathcal O^{2,\A^\sharp}(S_2[1])=0$, and hence
$\operatorname{depth}_{\mathcal O}^{\A^\sharp}(S_2[1])=2$. In the original heart $\A$, the object $S_2$ is simple and hence
$\mathcal O^{\A}(S_2)=0$. Thus HRS tilting creates a nontrivial
obstruction layer:
$\mathcal O^{\A}(S_2)=0$, whereas
$\mathcal O^{\A^\sharp}(S_2[1])\cong S_1$.
\end{example}

\subsection{Obstruction shadows}

\begin{example}
In the \(A_2\)-example above, consider the torsion pair
$\T=\langle S_1\rangle$,
$\F=\{M\in\A:\operatorname{Hom}_{\A}(S_1,M)=0\}$.
The obstruction shadow is the Serre subcategory
$\Shad_{\T}(\A^\sharp/\A)\subseteq\T$ generated by the objects
$H^0_{\A}\bigl(\mathcal O^{\A^\sharp}(F[1])\bigr)$ for $F\in\F$.
Since $S_2\in\F$ and, by the preceding example,
$\mathcal O^{\A^\sharp}(S_2[1])\cong S_1$, we have
$H^0_{\A}\bigl(\mathcal O^{\A^\sharp}(S_2[1])\bigr)\cong S_1$.
Therefore $S_1\in\Shad_{\T}(\A^\sharp/\A)$.
Since $\Shad_{\T}(\A^\sharp/\A)$ is a Serre subcategory and
$\T=\langle S_1\rangle$, it follows that
$\T\subseteq\Shad_{\T}(\A^\sharp/\A)$.
The reverse inclusion holds by construction, so
$\Shad_{\T}(\A^\sharp/\A)=\T$.
Thus the obstruction shadow recovers the entire torsion class. This
illustrates concretely how obstruction data arising from tilting can
recover the torsion-theoretic structure used to construct the tilt.
\end{example}

\section{Conclusion}
\label{sec:conclusion}

We have developed a semisimple obstruction theory for finite-length
abelian categories based on the principle that semisimple objects act
as universal detectors of visible structure. The obstruction subobject
\(\mathcal O(X)\) records the part of an object invisible to every
morphism into a semisimple object. It determines a universal
semisimple quotient and a canonical semisimplification reflector, while
its iteration yields an obstruction filtration measuring the persistence
of non-semisimple structure.

In finite-length module categories, this filtration recovers the
classical radical filtration and Loewy theory. These are recovery
results rather than definitions: the obstruction construction is
intrinsic to the ambient abelian category and requires no prior
ring-theoretic or radical-theoretic structure. Classical radical theory
therefore appears as the fixed-heart module-theoretic instance of the
more general semisimple obstruction formalism.

The dependence of obstruction on the ambient heart extends the theory
beyond this classical setting. Under Happel--Reiten--Smal{\o} tilting,
extension data can become new obstruction layers, and the resulting
obstruction shadows can recover torsion-theoretic information associated
with the tilt. Thus semisimple obstruction provides a common categorical
framework for universal reconstruction, persistence, and the variation
of semisimple detectability across hearts.

\bibliographystyle{amsplain}
\bibliography{references}

@book{AndersonFuller1992Rings,
 author = {Anderson, F. W. and Fuller, K. R.},
 title = {Rings and categories of modules.},
 edition = {2. ed.},
 fseries = {Graduate Texts in Mathematics},
 series = {Grad. Texts Math.},
 issn = {0072-5285},
 volume = {13},
 isbn = {0-387-97845-3},
 year = {1992},
 publisher = {New York: Springer-Verlag},
 language = {English},
 zbMATH = {107550},
 Zbl = {0765.16001}
}

@book{AssemSimsonSkowronski,
 author = {Assem, Ibrahim and Simson, Daniel and Skowro{\'n}ski, Andrzej},
 title = {Elements of the representation theory of associative algebras. {Vol}. 1: {Techniques} of representation theory.},
 fseries = {London Mathematical Society Student Texts},
 series = {Lond. Math. Soc. Stud. Texts},
 issn = {0963-1631},
 volume = {65},
 isbn = {0-521-58631-3; 0-521-58423-X},
 year = {2006},
 publisher = {Cambridge: Cambridge University Press},
 language = {English},
 zbMATH = {2228448},
 Zbl = {1092.16001}
}

@book{Benson,
 author = {Benson, D. J.},
 title = {Representations and cohomology. {I}: {Basic} representation theory of finite groups and associative algebras},
 fseries = {Cambridge Studies in Advanced Mathematics},
 series = {Camb. Stud. Adv. Math.},
 volume = {30},
 isbn = {0-521-63653-1},
 year = {1998},
 publisher = {Cambridge: Cambridge University Press},
 language = {English},
 zbMATH = {1181204},
 Zbl = {0908.20001}
}

@misc{CurtisReiner,
 author = {Curtis, Charles W. and Reiner, Irving},
 title = {Methods of representation theory, with applications to finite groups and orders. {Vol}. {I}},
 year = {1981},
 language = {English},
 howpublished = {Pure and {Applied} {Mathematics}. {A} {Wiley}-{Interscience} {Publication}. {New} {York} etc.: {John} {Wiley} \& {Sons}. {XXI}, 819 p. {{\textsterling}} 40.70 (1981).},
 zbMATH = {3736029},
 Zbl = {0469.20001}
}

@book{EtingofGelaki2015Tensor,
 author = {Etingof, P. and Gelaki, S. and Nikshych, D. and Ostrik, V.},
 title = {Tensor categories},
 fseries = {Mathematical Surveys and Monographs},
 series = {Math. Surv. Monogr.},
 issn = {0076-5376},
 volume = {205},
 isbn = {978-1-4704-2024-6},
 year = {2015},
 publisher = {Providence, RI: American Mathematical Society (AMS)},
 language = {English},
 doi = {10.1090/surv/205},
 zbMATH = {6444410},
 Zbl = {1365.18001}
}

@book{Faith1973Algebra,
 author = {Faith, C.},
 title = {Algebra: {Rings}, modules and categories. {I}},
 fseries = {Grundlehren der Mathematischen Wissenschaften},
 series = {Grundlehren Math. Wiss.},
 issn = {0072-7830},
 volume = {190},
 year = {1973},
 publisher = {Springer, Cham},
 language = {English},
 zbMATH = {3419292},
 Zbl = {0266.16001}
}

@article{Freyd,
 author = {Freyd, Peter},
 title = {Abelian categories},
 fjournal = {Reprints in Theory and Applications of Categories},
 journal = {Repr. Theory Appl. Categ.},
 volume = {2003},
 number = {3},
 pages = {xxiii + 1--164},
 year = {2003},
 language = {English},
 zbMATH = {2028925},
 Zbl = {1041.18001}
}

@book{HappelTriangulated,
 author = {Happel, Dieter},
 title = {Triangulated categories in the representation theory of finite dimensional algebras},
 fseries = {London Mathematical Society Lecture Note Series},
 series = {Lond. Math. Soc. Lect. Note Ser.},
 issn = {0076-0552},
 volume = {119},
 isbn = {0-521-33922-7},
 year = {1988},
 publisher = {Cambridge (UK) etc.: Cambridge University Press},
 language = {English},
 zbMATH = {40982},
 Zbl = {0635.16017}
}

@book{HRS,
 author = {Happel, Dieter and Reiten, Idun and Smal{\o}, Sverre O.},
 title = {Tilting in abelian categories and quasitilted algebras},
 fseries = {Memoirs of the American Mathematical Society},
 series = {Mem. Am. Math. Soc.},
 issn = {0065-9266},
 volume = {575},
 isbn = {978-0-8218-0444-5; 978-1-4704-0160-3},
 year = {1996},
 publisher = {Providence, RI: American Mathematical Society (AMS)},
 language = {English},
 doi = {10.1090/memo/0575},
 zbMATH = {867897},
 Zbl = {0849.16011}
}

@book{Jacobson1956Structure,
  author    = {Jacobson, N.},
  title     = {Structure of Rings},
  series    = {American Mathematical Society Colloquium Publications},
  volume    = {37},
  publisher = {American Mathematical Society},
  address   = {Providence, RI},
  year      = {1956},
  isbn      = {978-0-8218-1037-8},
  mrnumber  = {0081264},
  zbmath    = {0073.02002}
}

@book{Lam2001FirstCourse,
 author = {Lam, T. Y.},
 title = {A first course in noncommutative rings.},
 edition = {2nd ed.},
 fseries = {Graduate Texts in Mathematics},
 series = {Grad. Texts Math.},
 issn = {0072-5285},
 volume = {131},
 isbn = {0-387-95325-6; 0-387-95183-0},
 year = {2001},
 publisher = {New York, NY: Springer},
 language = {English},
 zbMATH = {1633834},
 Zbl = {0980.16001}
}

@book{MacLane,
 author = {Mac Lane, S.},
 title = {Categories for the working mathematician.},
 edition = {2nd ed},
 fseries = {Graduate Texts in Mathematics},
 series = {Grad. Texts Math.},
 issn = {0072-5285},
 volume = {5},
 isbn = {0-387-98403-8},
 year = {1998},
 publisher = {New York, NY: Springer},
 language = {English},
 doi = {book/10.1007/978-1-4757-4721-8},
 zbMATH = {1216133},
 Zbl = {0906.18001}
}

@book{Neeman,
 author = {Neeman, Amnon},
 title = {Triangulated categories},
 fseries = {Annals of Mathematics Studies},
 series = {Ann. Math. Stud.},
 volume = {148},
 isbn = {0-691-08685-0; 0-691-08686-9},
 year = {2001},
 publisher = {Princeton, NJ: Princeton University Press},
 language = {English},
 doi = {10.1515/9781400837212},
 zbMATH = {1573275},
 Zbl = {0974.18008}
}

@book{Pierce,
 author = {Pierce, R. S.},
 title = {Associative algebras},
 fseries = {Graduate Texts in Mathematics},
 series = {Grad. Texts Math.},
 issn = {0072-5285},
 volume = {88},
 year = {1982},
 publisher = {Springer, Cham},
 language = {English},
 zbMATH = {3783206},
 Zbl = {0497.16001}
}

@book{Serre,
 author = {Serre, Jean-Pierre},
 title = {Linear representations of finite groups. {Translated} from the {French} by {Leonard} {L}. {Scott}},
 fseries = {Graduate Texts in Mathematics},
 series = {Grad. Texts Math.},
 issn = {0072-5285},
 volume = {42},
 year = {1977},
 publisher = {Springer, Cham},
 language = {English},
 zbMATH = {3552764},
 Zbl = {0355.20006}
}

\end{document}